\documentclass{amsart}
\usepackage{amssymb, secteqn}
\usepackage{graphicx}
\setlength{\marginparwidth}{1.12in}


\marginparwidth 0pt
\oddsidemargin  0pt
\evensidemargin  0pt
\marginparsep 0pt
\topmargin   0pt
\textwidth   6.5 in
\textheight 8.5 in


\usepackage{color}
\definecolor{darkgreen}{cmyk}{1,0,1,.2}
\definecolor{m}{rgb}{1,0.1,1}
\definecolor{green}{cmyk}{1,0,1,0}

\definecolor{test}{rgb}{1,0,0}
\definecolor{cmyk}{cmyk}{0,1,1,0}


\long\def\green#1{\textcolor {green}{#1}}
\long\def\m#1{\textcolor {m}{#1}}

%
%
\newtheorem{Equation}{}[section]

\newtheorem{conjecture}[Equation]{Conjecture}
\newtheorem{corollary}[Equation]{Corollary}
\newtheorem{definition}[Equation]{Definition}

\newtheorem{proposition}[Equation]{Proposition}

\newtheorem{theorem}[Equation]{Theorem}

\def\ch{\operatorname{ch}}

\def\codim{\operatorname{codim}}

\def\I{\operatorname{I}}
\def\ind{\operatorname{ind}}

\def\GL{\operatorname{GL}}

\def\oH{\operatorname{H}}

\def\Ind{\operatorname{Ind}}

\def\mod{\operatorname{mod}}

\def\tr{\operatorname{tr}}

\def\span{\operatorname{span}}

\def\C{\mathbb C}
\def\D{\mathbb D}
\def\H{\mathbb H}
\def\N{\mathbb N}
\def\R{\mathbb R}
\def\S{\mathbb S}
\def\T{\mathbb T}
\def\Z{\mathbb Z}

\def\Q{\mathbb Q}

\def\mfa{{\mathfrak a}}

\def\cA{{\mathcal A}}
\def\maA{{\mathcal A}}
\def\maB{{\mathcal B}}

\def\maD{{\mathcal D}}

\def\maE{{\mathcal E}}

\def\cG{{\mathcal G}}

\def\cG{{\mathcal G}}

\def\cH{{\mathcal H}}

\def\maR{{\mathcal R}}

\def\cS{{\mathcal S}}
\def\maS{{\mathcal S}}
\def\maT{{\mathcal T}}

\def\maU{{\mathcal U}}
\def\cU{{\mathcal U}}

\def\what{\widehat}
\def\wtit{\widetilde}

\def\wD{\widetilde{D}} 
\def\wE{\widetilde{E}}
\def\wF{\widetilde{F}}

\def\wL{\widetilde{L}} 
\def\wM{\widetilde{M}}

\def\wB{\widetilde{B}}

\def\dd{\displaystyle}

\def\ep{\epsilon}




\begin{document}



\title[Geometric Non-Commutative Geometry \today] 
{Geometric Non-Commutative Geometry  \\ \today}


\author[M-T. Benameur]{Moulay-Tahar Benameur}
\address{Institut Montpellierain Alexander Grothendieck, UMR 5149 du CNRS, Universit\'e de Montpellier}
\email{moulay.benameur@umontpellier.fr}

\author[J.  L.  Heitsch \today]{James L.  Heitsch}
\address{Mathematics, Statistics, and Computer Science, University of Illinois at Chicago} 
\email{heitsch@uic.edu}

\thanks{MSC (2010) 53C12, 57R30, 53C27, 32Q10. \\
Key words: positive scalar curvature,  foliations, enlargeability.}

\begin{abstract}
In a recent paper, the authors proved that no spin foliation on a compact enlargeable  manifold with  Hausdorff homotopy graph admits a metric of positive scalar curvature on its leaves.  This result extends groundbreaking results of Lichnerowicz, Gromov and Lawson, and Connes on the non-existence of metrics of positive scalar curvature.
In this paper we review in more detail the material needed for the proof of our theorem and we extend our non-existence results to non-compact manifolds of bounded geometry.  
{We also give a first obstruction result for the existence of metric with (not necessarily uniform) leafwise PSC in terms of the A-hat class in Haefliger cohomology.} 
\end{abstract}
\maketitle


\section{Introduction}

In \cite{BH19}, we give a proof of Theorem \ref{BHspin} below, which extends to foliations seminal results on the non-existence of metrics of positive scalar curvature (PSC).   The first result of this type is due to  Lichnerowicz, \cite{Li63}, where he showed that the A-hat genus of a compact spin manifold is an obstruction to such metrics.  This result was greatly expanded in a series of papers by Gromov and Lawson, \cite{GL1,GL2,GL3}, where they introduced the concept of enlargeability, and proved that a compact enlargeable manifold does not admit metrics of PSC.   In \cite{C86}, Connes extended  the Lichnerowicz result to foliations, when he proved  that  the A-hat genus of a compact manifold is an obstruction to  it having a  spin foliation such that the tangent bundle of the foliation admits a metric with PSC.
His proof uses the Connes-Skandalis index theorem for foliations, \cite{CS84},  and Connes' deep result {of topological invariance of the transverse fundamental class.} Our theorem is
\begin{theorem}\label{BHspin}
Suppose that $F$  is a spin foliation with Hausdorff homotopy graph of a compact enlargeable  manifold $M$.  Then $M$  does not admit a metric which induces positive scalar curvature on the leaves of $F$.
\end{theorem}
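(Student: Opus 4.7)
The plan is to run a Lichnerowicz--Gromov--Lawson argument leafwise and detect a contradiction via the Connes--Skandalis longitudinal index theorem for foliations. We argue by contradiction: assume that $M$ admits a metric $g$ whose leafwise scalar curvature $\kappa^F$ is everywhere positive. Compactness of $M$ then yields a uniform lower bound $\kappa^F \geq \kappa_0 > 0$.

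Enlargeability supplies, for every $\epsilon>0$, a finite spin covering $\pi_\epsilon : \wM_\epsilon \to M$ together with an $\epsilon$-contracting smooth map $f_\epsilon : \wM_\epsilon \to S^n$ of nonzero degree, where $n = \dim M$. The pullback foliation $\wF_\epsilon$ on $\wM_\epsilon$ remains spin with leafwise scalar curvature still $\geq \kappa_0$, and finiteness of $\pi_\epsilon$ preserves Hausdorffness of the homotopy graph. I would then pull back a vector bundle $E_0 \to S^n$ whose reduced K-theory class is a generator (using a suspension trick if $n$ is odd) to obtain $E_\epsilon := f_\epsilon^{*} E_0$ of rank $r$; the Hermitian connection pulled back from $S^n$ has curvature $R^{E_\epsilon}$ of norm $\leq C\epsilon^2$, where $C$ depends only on $E_0$.

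Let $\wD_\epsilon$ be the leafwise spin Dirac operator on $(\wM_\epsilon, \wF_\epsilon)$, and consider the virtual twisted operator associated with $[E_\epsilon] - [\C^r]$. Applying the twisted leafwise Bochner--Lichnerowicz identity to each summand and subtracting, the zero order part is bounded leafwise below by $\kappa_0/4 - C'\epsilon^2$, which is $\geq \kappa_0/8 > 0$ once $\epsilon$ is small enough. The Hausdorff hypothesis on the homotopy graph lets us form the reduced foliation $C^*$-algebra $C^*(\wM_\epsilon, \wF_\epsilon)$ and the longitudinal index class in $\oK_0(C^*(\wM_\epsilon, \wF_\epsilon))$, and the uniform leafwise spectral gap forces this virtual index to vanish.

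The contradiction comes from evaluating the vanishing index topologically. By the Connes--Skandalis longitudinal index theorem, paired against Connes' transverse fundamental class, the index equals
$$
\int_{\wM_\epsilon} \what{A}(T\wF_\epsilon) \, \bigl(\ch(E_\epsilon) - r\bigr).
$$
The main obstacle is to show that this integer is nonzero. Since $\ch(E_\epsilon) - r = f_\epsilon^{*}(\ch(E_0) - r)$ is the pullback of a class supported in top degree $n$, multiplication by $\what{A}(T\wF_\epsilon)$ collapses to this pullback alone (positive-degree components of $\what{A}$ are killed on degree grounds), and integration produces $\deg(f_\epsilon)$ times a fixed nonzero universal constant. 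This nonvanishing contradicts the Lichnerowicz step, completing the proof.
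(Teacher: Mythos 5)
The decisive gap is your opening assumption that enlargeability supplies, for every $\epsilon>0$, a \emph{finite} (hence compact) covering $\wM_\epsilon\to M$. With the definition used here (Definition \ref{enlarge}, as in Gromov--Lawson), the covering is an arbitrary metric covering and $f_\epsilon$ is only required to be constant outside a compact set; in general the $\epsilon$-contracting maps of nonzero degree exist only on non-compact covers, and restricting to finite covers proves the theorem only for the strictly smaller class of ``compactly enlargeable'' manifolds. The non-compact case is precisely where the work lies: $(\wM_\epsilon,\wF_\epsilon)$ is then a non-compact foliated manifold of bounded geometry, the Connes--Skandalis longitudinal index theorem for compact foliated manifolds and the pairing of a class in $\oK_0(C^*(\wM_\epsilon,\wF_\epsilon))$ with the transverse fundamental class are not directly available, and the integral $\int_{\wM_\epsilon}\what{A}(T\wF_\epsilon)(\ch(E_\epsilon)-r)$ is not yet identified with any index. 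The paper replaces this step by a relative-index argument in compactly supported (bounded) Haefliger cohomology: both $\wtit{\maD}_{\wE}$ and the trivially twisted operator $\wtit{\maD}_{\I^m}$ have uniform spectral gaps at $0$, and, using finite propagation speed and Roe-type approximation of the Gaussian by Schwartz functions with compactly supported Fourier transform, the difference of the two superconnection heat supertraces is a compactly supported Haefliger form; pairing it with Connes' transverse fundamental current gives a $t$-independent number whose $t\to\infty$ limit is $0$ (because of the gaps) and whose $t\to 0$ limit is $\frac{1}{(n-1)!}\int_{\wM}c_n(\wE)\neq 0$, the desired contradiction. This is also where the Hausdorff homotopy graph hypothesis actually enters (transverse smoothness of the leafwise heat kernels, Duhamel's formula, the Haefliger Chern character of \cite{H95,HL99}); it is not needed, as you suggest, to form the foliation $C^*$-algebra, which exists whether or not the graph is Hausdorff.

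Even in the special case where a compact cover happens to exist, your final step --- evaluating the vanishing $K$-theoretic index against the transverse fundamental class to obtain $\int_{\wM_\epsilon}\what{A}(T\wF_\epsilon)(\ch(E_\epsilon)-r)$ --- is exactly Connes' theorem from \cite{C86}, resting on the longitudinal index theorem together with his deep results on the transverse fundamental class; quoting it is legitimate, but then your argument is Connes' original one (and never uses the Hausdorff hypothesis), while still not reaching the general enlargeable case asserted by the theorem. Your Lichnerowicz-type step (uniform gap from $\kappa_0/4-C'\epsilon^2>0$ on the leaves of the lifted foliation) and the degree count showing that only the degree-zero part of $\what{A}$ contributes do agree with the paper's proof.
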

In  \cite{Z17},  Zhang  showed how to use his results in  \cite{Z16} to remove the Hausdorff assumption.

In {\cite{GL3}, Theorem 7.3, Gromov and Lawson proved  the following.

\begin{theorem}\label{GLNPB}  A spin
enlargeable complete non-compact Riemannian manifold $M$ has no uniform positive lower bound on its scalar curvature. 
\end{theorem}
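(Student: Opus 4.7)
The plan is to argue by contradiction via the standard Lichnerowicz/Gromov--Lawson twisted Dirac scheme, adapted to complete non-compact manifolds through their relative index theorem. Suppose for contradiction that the scalar curvature of $M$ satisfies $\kappa \ge \kappa_0 > 0$ uniformly. By spin enlargeability, for each $\epsilon > 0$ there exist a spin covering $\pi : \wtit{M} \to M$ and a smooth $\epsilon$-contracting map $f : \wtit{M} \to S^n$ of non-zero degree which is constant outside a compact set $K \subset \wtit{M}$; here $n = \dim M$, which we arrange to be even (replacing $M$ by $M\times S^1$ if necessary). The pulled-back metric inherits the uniform lower bound $\wtit{\kappa} \ge \kappa_0 > 0$.

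Fix a Hermitian bundle with connection $E_0 \to S^n$ whose reduced Chern character pairs non-trivially with the fundamental class, $\langle \ch(E_0) - \mathrm{rk}(E_0), [S^n]\rangle \neq 0$. Set $E := f^*E_0 \to \wtit{M}$ with the pulled-back connection; since $f$ is constant outside $K$, $E$ is canonically isomorphic to the trivial bundle $E^0$ of rank $\mathrm{rk}(E_0)$ on $\wtit{M}\setminus K$. Let $\Dir_E$ and $\Dir_{E^0}$ denote the spin Dirac operators of $\wtit{M}$ twisted by $E$ and $E^0$ respectively. They agree outside $K$, so the Gromov--Lawson relative index $\ind(\Dir_E,\Dir_{E^0})$ is well defined and, by their relative index theorem together with the vanishing of $H^{2j}(S^n)$ for $0<2j<n$, equals
\begin{equation*}
\deg(f)\cdot \langle \ch(E_0)-\mathrm{rk}(E_0),[S^n]\rangle \;\neq\; 0.
\end{equation*}

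On the analytic side, the Lichnerowicz formula reads
\begin{equation*}
\Dir_E^2 \;=\; \nabla^*\nabla + \tfrac{\wtit{\kappa}}{4} + \maR^E,
\end{equation*}
where the twisting endomorphism satisfies the pointwise bound $\|\maR^E\|_\infty \le C\,\epsilon^2$, since $f$ is $\epsilon$-contracting and the curvature of $E_0 \to S^n$ is bounded. Choosing $\epsilon$ so small that $C\epsilon^2 < \kappa_0/8$ makes $\Dir_E^2 \ge \kappa_0/8$ uniformly; the same inequality holds trivially for $\Dir_{E^0}^2$. Both operators are therefore uniformly $L^2$-invertible, which forces $\ind(\Dir_E,\Dir_{E^0})=0$, contradicting the non-zero value above.

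The main obstacle lies in the analysis underpinning the relative index theorem on a complete non-compact manifold: making $\ind(\Dir_E,\Dir_{E^0})$ well defined when neither operator is individually Fredholm, identifying it with the topological formula above, and verifying that uniform positivity of both Dirac Laplacians forces the relative index to vanish. Once this framework is in place, pulling the uniform PSC bound up to $\wtit{M}$, controlling $\|\maR^E\|$ by the dilation of $f$, and computing the topological index in terms of $\deg(f)$ are routine.
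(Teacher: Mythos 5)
Your argument is correct, but it is not the route this paper takes in the non-compact case: what you have written is essentially Gromov and Lawson's own proof of this statement (\cite{GL3}, Theorem 7.3), which the paper quotes rather than reproves. All of your steps are sound — the uniform bound $\kappa\ge\kappa_0$ and the spin structure lift to the cover, $\|\maR^{E}\|\le C\ep^2$ with $C$ depending only on $E_0$ and $\dim M$, choosing $\ep$ with $C\ep^2<\kappa_0/8$ makes both twisted Dirac Laplacians uniformly positive on the complete manifold $\wtit M$, hence both operators are invertible and the relative index vanishes, while the relative index theorem plus the vanishing of $H^{2j}(\S^n)$ in intermediate degrees gives the nonzero value $\deg(f)\langle \ch(E_0)-\operatorname{rk}(E_0),[\S^n]\rangle$ — and the analytic facts you defer (well-definedness of the relative index for operators that agree and are positive at infinity, and its topological computation) are exactly the content of \cite{GL3} (see also \cite{Roe91}), so there is no gap, only reliance on that published machinery.

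The paper, by contrast, deliberately avoids relative index theory, precisely because it does not extend to foliations. Its alternative path (Sections 5 and 6, following \cite{H02}) is to work with the trace $\tr_b$ and the index $\Ind(\maD_E)\in H^0_b(M/TM)$ for bounded geometry manifolds: uniform positivity kills the kernels of both the twisted and the untwisted operators, so both index classes vanish in $H^0_b(\what M/T\what M)$, while Theorem \ref{IndexJames} identifies them with $\int_F \what A(T\what M)\ch(\what E)$ and $\int_F \what A(T\what M)$; subtracting and pairing with a suitable closed bounded functional, the contradiction with $\int_{\what M} c_n(\what E)\neq 0$ then takes place in $H^0_b$ rather than in $\R$. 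The trade-off between the two approaches is worth noting: your (Gromov--Lawson) route needs only completeness, exactly as in the statement, whereas the BG-cohomology route requires bounded geometry (automatic for covers of compact manifolds, an extra hypothesis in general); on the other hand, the paper's route dispenses with relative index theory altogether and is the argument that carries over to the foliated Theorems \ref{BHspin} and \ref{BHspin2}. One minor point you could make explicit: the reduction to even dimension uses that $M\times\S^1$ is again complete, spin, enlargeable, and uniformly positively curved — standard, and used without comment by the paper as well.
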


The existence of such a bound  for PSC metrics (automatic in the compact case) was used in the proof of  Theorem \ref{BHspin}.  We {explain in the present paper  that our proof in \cite{BH19} extends to show} that the scalar curvature on a spin foliation with Hausdorff homotopy graph of a non-compact Riemannian enlargeable  manifold of bounded geometry has no uniform positive lower bound.

  In \cite{SZ18}, Su and Zhang showed how to extend Theorem \ref{GLNPB} to foliations of non-compact (not necessarily complete!) manifolds.  In particular they proved the following.

\begin{theorem} Suppose $F$ is a foliation of an enlargeable Riemannian manifold $M$.   If either $M$ or $F$ is spin, then  the leafwise scalar curvature of the induced metric on $F$ has no uniform positive lower bound.
\end{theorem}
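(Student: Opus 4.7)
The plan is to argue by contradiction via the Gromov–Lawson enlargeability technique, adapted to the foliation setting with the leafwise Dirac operator playing the role of the ambient Dirac operator used in Theorems \ref{BHspin} and \ref{GLNPB}. Assume there exists $\kappa_{0} > 0$ such that the leafwise scalar curvature $k^{F}$ satisfies $k^{F} \ge \kappa_{0}$ on all of $M$. First reduce to the case that $TF$ is spin: if only $M$ (not $F$) is spin, then the normal bundle $Q = TM/TF$ is spin, and a standard twisting absorbs the spinors of $Q$ into a twisted leafwise Dirac operator, so the argument below goes through \emph{mutatis mutandis} after this replacement.

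Next, exploit enlargeability of $M$. For any $\varepsilon > 0$ there is a spin covering $\pi \colon \what{M} \to M$ together with a smooth map $f_{\varepsilon} \colon \what{M} \to S^{2k}$ of non-zero degree which is $\varepsilon$-contracting and locally constant outside a compact subset of $\what{M}$. Pull back $F$ to a foliation $\what{F}$ on $\what{M}$; since scalar curvature is a local invariant, its induced leafwise scalar curvature still satisfies $k^{\what{F}} \ge \kappa_{0}$. Fix a virtual bundle $E_{0}$ on $S^{2k}$ with non-trivial top Chern character, and set $E = f_{\varepsilon}^{*}E_{0}$. Then $E$ is virtually trivial outside a compact set, and $\|R^{E}\| \le C \varepsilon^{2}$. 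Twisting the leafwise Dirac operator $\Dir_{\what{F}}$ by $E$ and applying the Lichnerowicz–Weitzenb\"ock formula leafwise gives
\[
\Dir_{E}^{2} \;\ge\; \nabla^{*}\nabla + \tfrac{1}{4}\, k^{\what{F}} - \|R^{E}\| \;\ge\; \tfrac{\kappa_{0}}{4} - C'\varepsilon^{2},
\]
which is uniformly positive for $\varepsilon$ small enough. Hence any reasonable \emph{relative} leafwise index of $\Dir_{E}$ with respect to the trivial twisting must vanish.

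The contradiction then comes from a topological computation of this relative index. Up to a non-zero constant, it equals the pairing of $f_{\varepsilon}^{*}\bigl(\what{A}(T\what{F}) \cdot \ch(E_{0} - \mathrm{triv})\bigr)$ against a compactly supported transverse class; by the non-zero degree of $f_{\varepsilon}$ and the choice of $E_{0}$, this pairing is non-zero. The principal obstacle is rigorously defining this relative leafwise index without the Hausdorff hypothesis on the homotopy graph and without compactness of $M$, since the Connes–Skandalis machinery does not apply directly. One must instead adopt a Gromov–Lawson style relative (or coarse) index defined directly from the leafwise analysis—exploiting the uniform bounded geometry of the leaves of $\what{F}$ and the fact that $f_{\varepsilon}$ is locally constant at infinity to ensure all transverse/Haefliger integrals converge and localize in the region where $E$ is non-trivial. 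This localization is the technical heart of the argument, and where the extension beyond Theorems \ref{BHspin} and \ref{GLNPB} must be performed.
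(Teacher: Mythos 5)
You have a genuine gap, in fact two. First, your reduction to the case $TF$ spin does not work as stated: if $M$ is spin it does \emph{not} follow that the normal bundle $Q=TM/TF$ (or $TF$) is spin, since $w_2(TM)=0$ only gives a relation between the Stiefel--Whitney classes of $TF$ and $Q$, not their vanishing. The case ``$M$ spin'' is handled in the literature by Zhang's sub-Dirac operators, and even there the Weitzenb\"ock remainder is not controlled by the leafwise scalar curvature alone; extra deformation arguments (Bismut--Lebeau type estimates on the Connes fibration) are needed precisely because the ``twist by normal spinors'' produces curvature terms that are neither small nor positive. So the \emph{mutatis mutandis} step hides the hardest half of the hypothesis ``$M$ or $F$ spin.'' Second, and more seriously, the index-theoretic input you invoke does not exist under the hypotheses of the statement. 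The theorem assumes nothing about the homotopy graph being Hausdorff, nothing about completeness of $M$ (Su--Zhang explicitly allow incomplete metrics), and nothing about bounded geometry; the leaves of $\what F$ on the cover $\what M$ need then be neither complete nor of bounded geometry, so the leafwise Dirac operator need not even be essentially self-adjoint, and uniform positivity of $\Dir_E^2$ does not by itself produce a well-defined vanishing ``relative leafwise index.'' The machinery this paper develops (the Haefliger-cohomology index of Theorems \ref{HLBH} and \ref{HLBH2}, and the argument of Sections \ref{PfBHspin}--\ref{NONPSC}) requires the Hausdorff graph, bounded geometry of $(M,F)$, and spectral-gap or transverse-regularity conditions; Connes--Skandalis requires compactness. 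Your paragraph acknowledging this ``principal obstacle'' simply postulates the needed relative index theory rather than constructing it, so what your argument actually yields is essentially Theorem \ref{BHspin3} (spin foliation, Hausdorff graph, bounded geometry, uniform leafwise PSC), not the stated theorem.

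For comparison: the statement you were asked to prove is not proved in this paper at all; it is quoted from Su and Zhang \cite{SZ18}. Their proof avoids both obstacles by working not with the leafwise Dirac operator on the (possibly non-Hausdorff) graph but with sub-Dirac operators on the Connes fibration over $M$, deformed so that the relevant operator is invertible at infinity and a Gromov--Lawson relative index argument applies on $M$ itself; this is exactly what lets them dispense with the Hausdorff, completeness, and bounded-geometry hypotheses that your outline implicitly reintroduces.
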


Note that neither theorem precludes the existence of metrics of PSC on $M$ or $F$.
On the other hand, we show how our previous results prove the following new theorem.  See Section \ref{NONPSC}.

\begin{theorem}\label{BHspin2} 
Suppose that $F$  is a spin foliation with Hausdorff homotopy graph of a non-compact Riemannian manifold $M$.  Suppose that $(M,F)$ is of bounded geometry, and that $\what{A}(F) \neq 0$ in $H^*_c(M/F)$, the Haefliger cohomology of $F$.  Suppose further that, for the Atiyah-Singer operator  $\maD$ on $F$ {is transversely regular near $0$ with Novikov-Shubin invariants $NS(\maD) > 3\codim (F)$}.
Then  the metric on the leaves of $F$ does not have positive scalar curvature.

If  $F$ is Riemannian, the condition on the Novikov-Shubin invariant becomes $NS(\maD) > \codim(F)/2$.
\end{theorem}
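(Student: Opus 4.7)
The plan is to adapt the heat-kernel argument used in \cite{BH19} to prove Theorem \ref{BHspin}, replacing the uniform spectral gap of $\maD^2$, which was available there from compactness and uniform positive scalar curvature, by the Novikov-Shubin bound and the hypothesis of transverse regularity near zero.

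Set $\tau(t) := \Tr_H^s\bigl(e^{-t\maD^2}\bigr) \in \oH^*_c(M/F)$, the Haefliger supertrace of the leafwise heat operator associated with $\maD$. Since $(M,F)$ has bounded geometry and the homotopy graph is Hausdorff, $\tau(t)$ is a well-defined Haefliger-cohomology class for every $t>0$, and the foliation McKean--Singer principle makes $t \mapsto \tau(t)$ constant. A short-time local index computation identifies this constant with the leafwise $\what{A}$-class:
\[
\tau(t) \;=\; \what{A}(F) \quad \text{for all } t > 0.
\]
Hence it suffices to show $\tau(t) \to 0$ as $t \to \infty$ under leafwise positive scalar curvature; this contradicts $\what{A}(F) \neq 0$.

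Assume $R_F > 0$ pointwise on every leaf. The Lichnerowicz formula $\maD^2 = \nabla^*\nabla + \tfrac14 R_F$ gives $\maD^2 > 0$ pointwise, so $\maD$ has no leafwise $L^2$ kernel and $E_{\{0\}}(\maD^2) = 0$. To convert this pointwise vanishing into vanishing of $\tau(t)$, split
\[
\tau(t) \;=\; \Tr_H^s\bigl(E_{[0,\epsilon]}(\maD^2)\, e^{-t\maD^2}\bigr) + \Tr_H^s\bigl(E_{(\epsilon,\infty)}(\maD^2)\, e^{-t\maD^2}\bigr).
\]
The second piece decays like $e^{-t\epsilon}$ by bounded geometry. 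For the first, transverse regularity of $\maD$ near $0$ makes $\Tr_H(E_{[0,\epsilon]}(\maD^2))$ finite for small $\epsilon$, and the Novikov-Shubin bound $NS(\maD) > 3\codim(F)$ forces a polynomial decay $\Tr_H(E_{[0,\epsilon]}) = O(\epsilon^\beta)$ with exponent large enough to absorb the on-diagonal growth of the transverse bounded-geometry heat kernel along the $\codim(F)$ directions; this on-diagonal growth is what produces the factor $3$. Choosing $\epsilon = \epsilon(t) \to 0$ at a rate that balances the two pieces yields $\tau(t) \to 0$.

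The Riemannian improvement to $NS(\maD) > \codim(F)/2$ arises because a holonomy-invariant transverse Riemannian volume is directly available in that setting, so the transverse bounded-geometry heat-kernel estimate may be replaced by plain integration against this invariant transverse volume, halving the effective dimension. The main obstacle is this quantitative step: without a uniform spectral gap, the vanishing of the long-time limit of a Haefliger-cohomology-valued supertrace has to be read off from spectral-density information alone, and quantifying this at the exact thresholds $3\codim(F)$ and $\codim(F)/2$---while keeping everything compatible with the Haefliger calculus set up in the earlier sections---is the delicate part of the argument.
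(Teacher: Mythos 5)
Your overall strategy---a $t$-independent Haefliger Chern character whose short-time limit is $\what{A}(F)$ and whose long-time limit vanishes under leafwise PSC---is the same heat-equation route the paper takes, but the step you only sketch is precisely the hard ingredient the paper does not redo: it invokes Theorem \ref{HLBH}/\ref{HLBH2} (in its bounded-geometry extension, following \cite{H02}), which says that under transverse regularity near $0$ and $NS(\maD)>3\codim(F)$ the $t\to\infty$ limit of the superconnection Chern character equals $\ch(\ker(\maD))$; then pointwise leafwise PSC kills the leafwise $L^2$ kernel by the Lichnerowicz argument, so $\what{A}(F)=\ch(\ker(\maD))=0$, contradicting the hypothesis. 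Your quantitative replacement for that theorem does not go through as stated. Bounding $\tr_s\bigl(E_{(\ep,\infty)}(\maD^2)e^{-t\maD^2}\bigr)$ by $e^{-t\ep}$ requires estimates on the (transversely smooth) Schwartz kernel restricted to the diagonal, since the Haefliger supertrace is obtained by integrating that kernel over the leaves; operator-norm decay alone is not enough. Likewise, transverse regularity does not give finiteness of a ``Haefliger trace of $E_{[0,\ep]}$'' in the sense you use it, and your proposed mechanism for the thresholds (absorbing ``on-diagonal growth of the transverse heat kernel,'' producing the factor $3$; ``halving the effective dimension'' via an invariant transverse volume in the Riemannian case) is not where $3\codim(F)$ and $\codim(F)/2$ actually come from: in \cite{HL99} the spectrum is split as $\{0\}$, $(0,t^{-a})$, $[t^{-a},\infty)$ and the bound $3\codim(F)$ emerges from lengthy estimates involving transverse derivatives of the projections and kernels, while the Riemannian improvement in \cite{BH08} uses that the homotopy graph is a fiber bundle and a $K$-theoretic index argument. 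As written, this step is a genuine gap.

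There is also a structural problem: you take the supertrace of the leafwise heat operator $e^{-t\maD^2}$, which is a Haefliger $0$-form, so its short-time limit only recovers the degree-zero component of $\int_F\what{A}(TF)$. The hypothesis is that $\what{A}(F)\neq 0$ in $H^*_c(M/F)$, which may hold only in positive Haefliger degree, so your reduction ``it suffices to show $\tau(t)\to 0$'' does not suffice. You need the Bismut superconnection $B_t$ of \cite{H95}, whose Chern character is a full even-degree Haefliger class; with that replacement, and with the long-time limit taken from Theorems \ref{HLBH} and \ref{HLBH2} rather than re-derived heuristically, the remainder of your argument (Lichnerowicz forces $P_0=0$, hence the limit is $0$, contradicting $\what{A}(F)\neq 0$) coincides with the paper's proof.
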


{Recall that $\maD$ is transversely regular if the spectral projections $P_0$ and $P_\ep=P_{(0, \ep)}$ are transversely smooth, for sufficiently small $\ep$, see \cite{BH08}.}   

The recent results of Zhang  and Su and Zhang, \cite{Z17, SZ18}, using sub Dirac operators, combined with our results, lead us to conjecture the following.

\begin{conjecture}
Let $F$ be a spin foliation with normal bundle $\nu$ of a non-compact manifold $M$.  Suppose that, for the Atiyah-Singer operator on $F$, the associated sub-Dirac operator $\maD$, \cite{Z16}, has Novikov-Shubin invariant $NS(\maD) > 3 \codim(F)$.  Suppose
further that $\dd \int_F  \what{A}(F) \what{L}(\nu) \neq 0$ in
$H^*_c(M/F)$. Then  the metric on the leaves of $F$ does not have
positive scalar curvature.  

If  $F$ is Riemannian, the condition on the Novikov-Shubin invariant becomes $NS(\maD) > \codim(F)/2$.
\end{conjecture}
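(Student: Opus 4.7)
The plan is to mimic, in the Haefliger / measured index setting, the sub-Dirac trick of Zhang \cite{Z16} that Su and Zhang used in \cite{SZ18} to weaken the spin hypothesis on $F$. Since $F$ is spin, the sub-Dirac operator $\maD$ can be constructed as the leafwise Atiyah-Singer operator twisted by a natural auxiliary bundle built from the exterior algebra of the complexified normal bundle $\nu_{\C}$. By Zhang's computation, its leafwise Bochner-Lichnerowicz formula takes the form
\[
\maD^2 \;=\; \nabla^*\nabla + \tfrac{k_F}{4} + \maR^\nu,
\]
where $\maR^\nu$ is a zeroth-order operator depending only on the curvature of $\nu$, hence uniformly bounded on $M$ by bounded geometry. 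By the usual twisted Dirac computation, the local index density of $\maD$ represents $\what{A}(F)\,\what{L}(\nu)$.

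The next step is to produce a genuine measured index of $\maD$ in Haefliger cohomology. The bounded geometry hypothesis combined with the Novikov-Shubin lower bound $NS(\maD)>3\codim(F)$, or $>\codim(F)/2$ in the Riemannian case, is designed precisely so that the spectral projections $P_0$ and $P_{(0,\ep)}$ are transversely smooth in the sense of \cite{BH08}, and so that the Haefliger supertrace of the leafwise heat operator $e^{-t\maD^2}$ defines a class in $H^*_c(M/F)$ for every $t>0$. Applying the local index theorem in this context, these classes all coincide and equal $\int_F \what{A}(F)\,\what{L}(\nu)$, which is non-zero by hypothesis.

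The final step is to extract a contradiction from leafwise PSC, following the scheme of the proof of Theorem \ref{BHspin2}. Using the uniform bound on $\maR^\nu$ coming from bounded geometry and the positivity of $k_F$, I would try to show that the Haefliger class of the heat supertrace tends to zero as $t\to\infty$, contradicting the non-vanishing of the index. The main obstacle is that leafwise PSC on a non-compact manifold need not be uniform, so the Lichnerowicz inequality does not directly yield $\maD^2\geq c>0$; the small-spectrum part of $\maD$, and in particular its kernel projection $P_0$, must be handled by hand. This is exactly where the Novikov-Shubin condition enters: it forces the spectral density of $\maD$ near $0$ to decay fast enough, transversely, to absorb the polynomial growth in $t$ coming from the degree of differentiation needed in Haefliger cohomology, and thereby to kill the long-time limit of the class of $P_0$. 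Making this balance quantitative, and in particular deriving the precise thresholds $3\codim(F)$ and $\codim(F)/2$, should be the technical heart of the argument and where I expect the conjecture to stand or fall.
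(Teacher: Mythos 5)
First, note that the statement you are proving is stated in the paper only as a \emph{conjecture}: the authors themselves do not prove it, precisely because the foliated sub-Dirac machinery it invokes has not been assembled. Your proposal does not close that gap, and its first step already contains the decisive error. Zhang's sub-Dirac operator is \emph{not} the leafwise Atiyah-Singer operator twisted by $\Lambda^*\nu_{\C}$: it Clifford-multiplies and differentiates in the normal directions as well, so it is not a tangentially elliptic leafwise operator, and none of the Haefliger heat-kernel machinery of \cite{H95}, \cite{HL99}, \cite{BH08} (leafwise heat kernels on the homotopy graph, transversely smooth spectral projections, the $t\to 0$ local index computation) applies to it as it stands; in particular the identification of its index density with $\what{A}(F)\,\what{L}(\nu)$ does not follow from ``the usual twisted Dirac computation,'' since twisting the leafwise Dirac operator by $\Lambda^*\nu_{\C}$ produces $\what{A}(F)\ch(\Lambda^*\nu_{\C})$, not $\what{A}(F)\what{L}(\nu)$. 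Moreover, the Bochner formula you write down, $\maD^2=\nabla^*\nabla+\tfrac{k_F}{4}+\maR^\nu$ with $\maR^\nu$ a zeroth-order term depending only on the curvature of $\nu$, is not what the sub-Dirac operator satisfies: its Lichnerowicz-type formula contains additional terms built from the second fundamental form and normal derivatives of the metric, and the entire content of \cite{Z16,Z17,SZ18} is the passage to the Connes fibration and the adiabatic deformations needed to control exactly those terms. Your proposal assumes that difficulty away.

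Even granting your formula, the positivity argument collapses. In the enlargeability proofs the twisting curvature $\maR^{\what{E}}$ can be made of order $\ep^2$ by choosing highly contracting maps, so that $\tfrac14\kappa+\maR^{\what{E}}>0$; here $\maR^\nu$ is fixed by the geometry of $(M,F)$ and has no small parameter, while the leafwise scalar curvature is only assumed positive, not uniformly positive, so ``uniformly bounded $\maR^\nu$ plus $k_F>0$'' yields no lower bound whatsoever on $\maD^2$, and hence no gap, no vanishing of $P_0$, and no contradiction. Two further points are unjustified as stated: the Novikov-Shubin bound does \emph{not} imply transverse smoothness of $P_0$ and $P_\ep$ (in Theorems \ref{HLBH} and \ref{BHspin2} transverse regularity is an independent hypothesis, and it would have to be added or proved here), and the long-time vanishing of the heat supertrace under non-uniform leafwise PSC — which you yourself flag as the place ``where the conjecture stands or falls'' — is precisely the missing analytic core, not a detail to be balanced against the thresholds $3\codim(F)$ and $\codim(F)/2$. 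As it stands, the proposal reproduces the heuristic that led the authors to formulate the conjecture, but it does not constitute a proof.
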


The road to our results is rather long, and as a result the proof given in  \cite{BH19} is rather dense.  In this paper, we give more background material and {explain how to extend our non-existence results to non-compact manifolds. We also take this opportunity to  give a simplified conceptual interpretation  of the Gromov-Lawson relative index theorem, based on our extension of Haefliger cohomology to non-compact manifolds and the superconnection heat approach of \cite{HL99}.}

\section{Characteristic classes, the $\what{A}$ genus, and Scalar Curvature}
For more on the facts quoted here see \cite{ MS74, KN99}.

Denote by $M$ a smooth  manifold and by $C^{\infty}(M)$ the smooth (real or complex valued) functions on $M$.  The tangent bundle to $M$ is denoted  by $TM$.  Suppose that $E \to M$ is a smooth (real or complex) vector bundle, and denote its smooth sections by  $C^{\infty}(E)$.   A connection $\nabla$ on $E$ assigns to each vector field $X \in C^{\infty}(TM)$ a smooth map $\nabla_X:C^{\infty}(E) \to C^{\infty}(E)$ so that for any $f \in C^{\infty}(M)$,  $\alpha_1, \alpha_2  \in  C^{\infty}(E)$, and $X,Y \in C^{\infty}(TM)$,
$$
\nabla_{fX+Y} (\alpha_1) \,\, = \,\,   f \nabla_X(\alpha_1) +\nabla_Y( \alpha_1)
\quad \text{and} \quad
\nabla_X(f\alpha_1 + \alpha_2) \,\, = \,\,   X(f)\alpha_1 + f \nabla_X(\alpha_1) +\nabla_X( \alpha_2).
$$
Connections always exist (use local triviality of the bundle $E$ and a partition of unity argument), are  actually local operators, and $\nabla_X(\alpha)(x)$ depends only on $X(x)$, and the first derivatives of $\alpha$ at $x$.  Suppose  $\mfa =(\alpha_1,...,\alpha_k)$ is a local framing of $C^{\infty}(E)$.  Then the local connection form is the $k \times k$ matrix of local one forms $\theta = [\theta_{ij}]$ defined by 
$$
 \nabla_{X}(\alpha_i) \,\, = \,\, \sum_j \theta_{ij}(X) \alpha_j.
$$

The curvature operator $R$  of $\nabla$ associates to $X,Y \in  C^{\infty}(TM)$ the smooth map $R_{X,Y}: C^{\infty}(E) \to C^{\infty}(E)$ given by 
$$
R_{X,Y}(\alpha) \,\, = \,\, (\nabla_X \nabla_Y \,\, - \nabla_Y\nabla_X - \nabla_{[X,Y]})(\alpha).
$$
The curvature is a pointwise operator, that is   $R_{X,Y}(f\alpha) = fR_{X,Y}(\alpha)$, and its local form is the $k \times k$ matrix of local two forms $\Omega = [\Omega_{ij}]$ given by 
$$
 \Omega_{ij} \,\, = \,\,   d \theta_{ij} \,\, - \,\, \sum_k  \theta_{ik} \wedge \theta_{kj}.
$$
The wonderful property of $\Omega$ is that, while it depends on the local framing $\mfa$, it changes very simply with a change of framing.  In particular, if $\mfa  =  g \cdot \what{\mfa}$ is a change of framing, where $g \in \GL_k$ (real or complex), then $\Omega = g \what{\Omega} g^{-1}$.  So, for example, $\tr(\Omega)$ is a globally well defined (closed!) 2 form on $M$, so defines an element in $H^*(M;\R)$, the deRham cohomology of $M$.

Now suppose that $E$ is a real bundle, and consider the differential forms  $c_1(\Omega)$,..., $c_k(\Omega)$ defined by
 $$
 \det( I - \frac{\lambda}{2i\pi} \Omega) \,\, = \,\,  1 \,\, + \,\, \lambda c_1(\Omega)  \,\, + \,\,  \lambda^2 c_2 (\Omega)   \,\, + \,\,\cdots  \,\, + \,\, \lambda^k c_k(\Omega).
$$
Then,
\begin{itemize}

\item  $c_j(\Omega)$ is a well defined, closed $2j$ form on $M$.

\medskip
\item  $c_{2j+1}(\Omega)$ is exact, so $[c_{2j+1}(\Omega)] = 0$ in $H^{ 4j+2}(M;\R)$.

\medskip
\item The $j$\,th Pontrjagin class of $E$ is:    \hspace{0.5mm}  $p_j(E) = [c_{ 2j}(\Omega)] \in H^{4j}(M;\R)$. 

\medskip
\item  $p_j(E)$ does NOT depend on $\nabla$, only on $E$
\end{itemize}
If $E$ is a $\C$ bundle, then we get the same results, except that the $c_{2j+1}(\Omega)$ are not necessarily exact.
\begin{itemize}
\item 
The $j$\,th Chern class of $E$ is:  \hspace{0.5mm}  $c_j(E) = [c_{j}(\Omega)] \in H^{2j}(M;\R)$.

\medskip
\item Both the Chern and  Pontrjagin classes are the images in  $H^*(M;\R)$ of integral classes, i.e.\ classes in $H^*(M;\Z)$.  This is why the term $1/2i\pi $ appears in their definitions.

\end{itemize}
Set  $C_k(\sigma_1,...,\sigma_k) \,\, = \,\, \sum_{j =1}^k  e^{x_j}$, where $\sigma_i$ is the $i$\,th elementary symmetric function in $x_1,...,x_k$.
\begin{itemize}
\item   The Chern character of $E$ is:  \hspace{0.5mm} $\ch(E) = C_k(c_1(E),...,c_k(E))  \,\,=\,\, [ \tr (\exp(- \Omega/2i\pi))].$
\medskip
\item  Note that the $\ch(E) \,\, = \,\, \dim(E) \,\, + $ higher order terms.
\end{itemize}
Now assume that $M$ is compact and Riemannian, and denote the classical $K$-theory of $\C$ vector bundles over $M$ by $K^0(M)$.  Then
\begin{theorem}
$\ch:K^0(M)\otimes \R \to H^{2*}(M;\R)$ is an isomorphism of algebras.
\end{theorem}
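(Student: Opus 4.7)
The plan is to reduce the theorem to a computation on spheres via Mayer--Vietoris and the five-lemma, after first checking that $\ch$ is a ring homomorphism. Additivity $\ch(E\oplus F) = \ch(E)+\ch(F)$ follows from the block-diagonal shape of the curvature of a direct-sum connection. Multiplicativity $\ch(E\otimes F) = \ch(E) \cdot \ch(F)$ comes from the splitting principle, which reduces the identity to the case of line-bundle summands where it is just $e^{x+y} = e^x e^y$. Naturality under pullbacks is immediate in local framings. Since $\ch$ is additive and natural, it factors through the Grothendieck group $K^0(M)$, and extending $\R$-linearly produces the morphism of $\R$-algebras $\ch\colon K^0(M)\otimes\R \to H^{2*}(M;\R)$.

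To apply Mayer--Vietoris I would work in the category of finite CW pairs, setting $K^{-n}(X) := \widetilde{K}^0(\Sigma^n X_+)$ so that topological $K$-theory fits into a six-term exact sequence which the Chern character respects via the suspension isomorphisms, Bott periodicity closing the diagram. The base computations are then on a point and on even spheres. On a point both sides are $\R$, identified by rank. On $S^{2n}$, Bott periodicity gives $\widetilde{K}^0(S^{2n})\cong\Z$, generated by the $n$-fold external product of the reduced Hopf class $[H]-1 \in \widetilde{K}^0(S^2)$; since $\ch([H]) = 1 + c_1(H)$ with $c_1(H)$ the positive generator of $H^2(S^2;\Z)$, the Chern character of the generator of $\widetilde{K}^0(S^{2n})$ is a nonzero rational multiple of the fundamental class in $H^{2n}(S^{2n};\R)$. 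After tensoring with $\R$ this becomes an isomorphism, while both sides vanish on odd spheres.

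Finally, since every compact smooth manifold has the homotopy type of a finite CW complex, I would induct on the number of cells: writing $M = X \cup_{S^{k-1}} D^k$, Mayer--Vietoris applied to both theories together with a five-lemma comparison propagate the isomorphism from $X$ and from $S^k$ to $M$. Multiplicativity of $\ch$ then upgrades the conclusion to an isomorphism of $\R$-algebras. The main obstacle is the sphere computation, which rests genuinely on Bott periodicity; the rest of the argument is formal naturality and diagram chasing.
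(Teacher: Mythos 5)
The paper does not actually prove this statement: it is quoted as classical background (with the references at the head of that section), so there is no argument of the authors' to compare yours against. Your proposal is the standard Atiyah--Hirzebruch-type proof, and its outline is correct: $\ch$ is additive and multiplicative (splitting principle), hence descends to a ring homomorphism on $K^0(M)\otimes\R$; one extends to a $\Z_2$-graded natural transformation of cohomology theories using suspensions and Bott periodicity; the base case is a point and the spheres, where $\ch$ of the $n$-fold external power of $[H]-1$ is in fact exactly the integral generator of $H^{2n}(\S^{2n};\Z)$ (you only need a nonzero multiple, but the precise statement is what makes $\ch$ commute with the Bott/suspension isomorphisms, a compatibility you should state explicitly since it is what makes your six-term diagrams commute); then a cell-by-cell Mayer--Vietoris induction over a finite CW structure on the compact manifold $M$, plus the five lemma, gives the isomorphism. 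Two small points to make airtight: (i) $\R$ is flat over $\Z$, so tensoring the six-term exact sequences with $\R$ preserves exactness, which is what legitimizes the five-lemma step after rationalization; (ii) once $\ch$ is known to be a ring homomorphism, being an $\R$-linear isomorphism automatically makes it an isomorphism of algebras, so the final ``upgrade'' needs no separate argument. With those remarks your route is a complete and standard proof of the quoted theorem, more detailed than anything in the paper itself.
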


The $\what{A}$ genus $\what{A}(M)$ is an exceptionally important invariant of $M$.   
Set
$$
\what{A}(\sigma_1(x_i), \sigma_2(x_i), \sigma_3(x_i),...)  \,\, = \,\,\prod_i \frac{\sqrt{x_i}/2}{\sinh(\sqrt{x_i}/2)}   \,\, = \,\, \prod_i \Big[\Big( \sum_{k=0}^{\infty} \frac{x_i^k}{2^{2k}(2k+1)!}\Big)^{-1}\Big].
$$  
Then
\begin{itemize}
\item
$\dd\what{A}(TM)  \,\, = \,\,  \what{A}(p_1(TM), p_2(TM), ...) \in H^{4*}(M;\Q)$.  \,\,Note that $\what{A}(TM)) \,\, = \,\, 1 \,\, + $ higher order terms.
\medskip
\item
$\dd\what{A}(M)  \,\, = \,\, \int_M \what{A}(TM) \in \Q$.
\end{itemize}
The invariant $\what{A}(M)$ is defined for all compact manifolds and is non-zero only if $\dim(M) = 4k$.   If $M$ is a spin manifold, it is an integer, but this is not true in general, e.g. $\what{A}(\C P^2) = -1/8$.  For more on this see below.

Denote the Riemannian structure on $M$ (the inner product on $TM$)  by $\langle \cdot, \cdot \rangle$.  Then $TM$ admits a special connection, the Levi-Civita connection $\nabla$, with curvature $R$.   The local matrices for these operators, computed with respect to local orthonormal bases, are $so_n$ (the Lie algebra of $SO_n$, so skew symmetric) matrices of forms.  The scalar curvature $\kappa$ is one of the simplest invariants of $(M,\langle \cdot, \cdot \rangle)$ and it is given as follows.  Suppose that $X_1,...,X_n$ is a local orthonormal framing of $TM$.  Then 
$$
\kappa(x) \,\, = \,\, -\sum_{i,j= 1}^n \langle R_{X_i,X_j}(X_i), X_j \rangle(x).
$$  
$ \langle R_{X_i,X_j}(X_i), X_j   \rangle(x)$ is the classical curvature at the point $x$ of the local two dimensional sub-manifold which is the exponential of a neighborhood of $0 \in \span(X_i,X_j)$.  $\kappa$ does not depend on the framing used, so it  is ``the average of the local  curvatures at $x$".  It  is a smooth function on $M$, and is the weakest of all curvature invariants.  

Note that, for any compact Riemannian manifold $M$, $M \times \S^2$ admits a metric of positive scalar curvature.  Just take the given metric on $M$ product with a multiple of the standard  metric on $\S^2$ whose  curvature swamps $\kappa$ on $M$.  This construction still  works if $M$ is non-compact, provided $\kappa$ on $M$ is bounded below.

\section{ Spin Structures and the Atiyah-Singer Operator}\label{spin}

For more on the facts quoted here see \cite{MS74, LM}.

Every Lie group has a universal covering group.  Of course the universal cover of $SO_2 = \S^1$ is $\R$. However, for $n > 2$, the universal covering of $SO_n$ is a double covering,  denoted $\Z_2 \to Spin_n \to SO_n$.    $M$ is a spin manifold if it is orientable and there is principle $Spin_n$ bundle $P$ over $M$ so that $TM \simeq P \times_{Spin_n} \R^n$.    Simply put this means  that given a trivialization of $TM$ over an open cover $\cU = \{U_i\}$, $TM \, | \, U_i \simeq U_i \times \R^n$, with change of coordinate functions $g_{ij} : U_i \cap U_j \to SO_n$, the $g_{ij}$ can be lifted to $\what{g}_{ij } : U_i \cap U_j \to  Spin_n$ so that the relations $g_{ij} g_{jk} = g_{ik}$ are preserved.

In terms of characteristic classes, $M$ is spin if and only if the first two Stiefel-Whitney classes, $w_1$ and $w_2$, of $TM$ are zero.   The first being zero means that $M$ is orientable, and the second that there are spin structures on $TM$.

A fundamental theorem in spin geometry is the following.
\begin{theorem} [A. Lichnerowicz, \cite{Li63}]
Suppose $M$ is a compact spin manifold and $\what{A}(M) \neq 0.$  
Then $M$  does not admit a metric of positive scalar curvature.
\end{theorem}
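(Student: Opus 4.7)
The plan is to prove the contrapositive: assuming $M$ admits a metric of positive scalar curvature, deduce that $\what{A}(M) = 0$. The proof rests on two pillars, the Lichnerowicz--Weitzenb\"ock formula for the Dirac operator and the Atiyah--Singer index theorem, which together convert the \emph{geometric} hypothesis $\kappa > 0$ into a \emph{topological} obstruction.

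First I would introduce the spinor bundle. Because $M$ is spin, the principal $SO_n$-frame bundle of $TM$ lifts to a principal $Spin_n$-bundle $P$, and associated to $P$ via the spin representation is the complex spinor bundle $S \to M$. When $n = \dim M$ is even (which we may assume, since $\what{A}(M)$ vanishes automatically in odd dimensions), $S$ splits as $S^+ \oplus S^-$ and the Atiyah--Singer--Dirac operator $\Dir : C^\infty(S^+) \to C^\infty(S^-)$ is elliptic and formally self-adjoint when viewed on all of $C^\infty(S)$. Its analytic index is
\[
\ind(\Dir) \;=\; \dim \Ker \Dir^+ - \dim \Ker \Dir^-.
\]
The Atiyah--Singer index theorem, specialized to the spin Dirac operator, identifies this with a characteristic number:
\[
\ind(\Dir) \;=\; \int_M \what{A}(TM) \;=\; \what{A}(M).
\]

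Next I would invoke the Lichnerowicz--Weitzenb\"ock identity. Using the Levi-Civita connection lifted to $S$, a direct Clifford-algebra computation gives
\[
\Dir^2 \;=\; \nabla^*\nabla \,+\, \tfrac{1}{4}\kappa,
\]
where $\kappa$ is the scalar curvature and $\nabla^*\nabla$ is the (non-negative) connection Laplacian on $S$. Assume now that $M$ carries a metric with $\kappa(x) > 0$ everywhere; by compactness there is a uniform lower bound $\kappa \geq \kappa_0 > 0$. For any smooth spinor $\psi$, integrating the formula and using self-adjointness yields
\[
\|\Dir \psi\|_{L^2}^2 \;=\; \|\nabla \psi\|_{L^2}^2 \,+\, \tfrac{1}{4}\int_M \kappa\, |\psi|^2\, \dvol \;\geq\; \tfrac{\kappa_0}{4}\, \|\psi\|_{L^2}^2.
\]
Hence $\Dir$ has trivial $L^2$-kernel; in particular $\Ker \Dir^+ = 0 = \Ker \Dir^-$, so $\ind(\Dir) = 0$, and combining with the index theorem gives $\what{A}(M) = 0$, contradicting the hypothesis.

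I do not expect a serious obstacle in this argument: the Lichnerowicz formula is a local Clifford-algebra computation and the index identification is quoted from Atiyah--Singer. The only delicate point worth flagging is the assumption that $M$ is even-dimensional, which needs to be handled at the outset (in odd dimensions $\what{A}(M) = 0$ for degree reasons, so the statement is vacuous there), and the choice of complex versus real spinors, where in dimensions $\equiv 0 \pmod 4$ one may further refine to the real index and obtain the stronger $\alpha$-invariant obstruction; for Lichnerowicz's theorem as stated, the complex version suffices.
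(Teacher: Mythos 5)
Your proposal is correct and follows essentially the same route as the paper: the Schr\"odinger--Lichnerowicz formula $\maD^2 = \nabla^*\nabla + \tfrac14\kappa$ forces $\ker\maD = 0$ when $\kappa>0$, and the Atiyah--Singer index theorem then gives $\what{A}(M)=\Ind(\maD)=0$. Your integrated $L^2$ version of the vanishing argument (and the remark about odd dimensions) is just a slightly more careful write-up of the paper's pointwise computation, not a different method.
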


If $M$ is spin, with spin structure  $TM \simeq P \times_{Spin_n} \R^n$, then  $TM$ has  $Spin_n$ connections (constructed using trivializations whose changes of coordinates take values in $Spin_n$), so let  $\nabla$ be one.  For $n$ even, $Spin_n$ has two different irreducible representations denoted $S_{\pm}$.  The spinor super (meaning $\Z_2$ graded) bundle is the bundle over $M$, 
$$
\maS = (P \times_{Spin_n} S_{+})  \oplus (P \times_{Spin_n} S_{-})=  \maS_{+}  \oplus \maS_{-}.
$$
  The connection $\nabla$ induces a connection, also denoted $\nabla$, on $\maS$  preserving $\maS_{+}$ and  $\maS_{-}$.   The bundle $TM$ acts on $\maS$ (by Clifford multiplication), interchanging $\maS_{+}$ and  $\maS_{-}$.  For $X \in TM$,  this action is denoted by $X \cdot$.  

The  Atiyah-Singer (super) operator on $\maS$ is given as follows.  Choose a local orthonormal  framing  $X_1, \ldots, X_n$ of $TM$, and set 
$$
\maD  \,\, = \,\, \sum_{i=1}^n X_i \cdot \nabla_{X_i} \quad\quad   \text{ which may be written as }  \quad\quad  \maD  \,\, = \,\, \left[\begin{array}{cc} 0 & \maD_{-}\\
\maD_{+}& 0\end{array}\right],
$$
since $\maD$ interchanges $\maS_{+}$ and  $\maS_{-}$.  $\maD$ does not depend on the choice of framing.

The operator $\maD$, so also $\maD_{+}$ and  $\maD_{-}$, is  elliptic.  This means roughly that it differentiates in all directions.  An important consequence is that if $M$ is compact, then the kernel and co-kernel of $\maD$ are finite dimensional.   In particular, both  $\ker(\maD_{+})$ and $\ker(\maD_{-})$ are finite dimensional.  The index $\Ind(\maD)$ of $\maD$ is the integer 
$$
\Ind(\maD) = \dim(\ker(\maD_{+})) - \dim(\ker(\maD_{-})).
$$
One of the most important theorems of the 20th century is the Atiyah-Singer Index Theorem, \cite{ASI}.  In this particular case it is 
\begin{theorem}\label{ASTHM}
\hspace{10mm}$\dd \Ind(\maD)   \,\, = \,\, 
\what{A}(M)  \,\, = \,\, \int_M \what{A}(p_1(TM), p_2(TM), ...)$.
\end{theorem}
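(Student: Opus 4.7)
The plan is to follow the heat-equation approach of McKean--Singer, Patodi, and Atiyah--Bott--Patodi, as sharpened by Getzler, since it gives the cleanest derivation of the pointwise local formula and meshes naturally with the superconnection techniques used later in the paper. The first step is the McKean--Singer formula: because $\maD_+$ and $\maD_-$ are mutual adjoints, their non-zero spectra coincide with multiplicities, so for every $t>0$
$$
\Ind(\maD) \,\, = \,\,  \dim \ker(\maD_+) - \dim\ker(\maD_-) \,\,=\,\, \Tr\big(e^{-t\maD_-\maD_+}\big) - \Tr\big(e^{-t\maD_+\maD_-}\big).
$$
Writing the right-hand side as a supertrace $\Tr_s$ of $e^{-t\maD^2}$ with respect to the $\Z_2$-grading $\maS=\maS_+\oplus\maS_-$, and using that $e^{-t\maD^2}$ has a smooth Schwartz kernel $k_t(x,y)$, we obtain, for every $t>0$,
$$
\Ind(\maD) \,\,=\,\, \int_M \Tr_s\big(k_t(x,x)\big) \, \dvol(x).
$$
Thus the index is computed by the pointwise supertrace on the diagonal, and because the left-hand side is $t$-independent we are free to pass to $t\to 0^+$.

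The second step is to set up the small-time expansion. By the Lichnerowicz formula
$$
\maD^2 \,\,=\,\, \nabla^*\nabla \,\, + \,\, \kappa/4,
$$
$\maD^2$ is a generalized Laplacian, hence its heat kernel admits the Minakshisundaram--Pleijel expansion whose coefficients are universal polynomials in the Riemann curvature of $M$. The difficulty is that individually each coefficient involves many curvature terms, and only after taking the supertrace does the miraculous cancellation occur that leaves the $\what{A}$-form. The right mechanism to expose this cancellation is Getzler's rescaling.

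The third step is Getzler's rescaling itself. Fix $x_0\in M$, use normal coordinates and a radially parallel orthonormal framing to trivialise $\maS$, and identify the Clifford algebra on the fibre with the exterior algebra of $T_{x_0}M$. Rescale spatial coordinates as $x\mapsto \sqrt{t}\,x$, time as $t\mapsto t$, and assign Clifford degree to the Clifford generators. Under this rescaling $t\,\maD^2$ converges, as $t\to 0^+$, to the generalized harmonic oscillator
$$
H \,\, = \,\, -\sum_i\Big(\pa_i \,\, - \,\, \tfrac{1}{4}\sum_j R_{ij}(x_0)\, x^j\Big)^2
$$
on polynomial-coefficient differential forms on $T_{x_0}M$, where $R=(R_{ij})$ is the Riemann curvature $2$-form matrix at $x_0$. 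Mehler's explicit formula gives
$$
k^H_1(0,0) \,\,=\,\, (4\pi)^{-n/2}\, \det^{1/2}\!\Big(\frac{R/2}{\sinh(R/2)}\Big),
$$
and extracting the top-degree exterior component, which is the only survivor of the Berezin supertrace identity $\Tr_s(c_{i_1}\cdots c_{i_k})=0$ unless $\{i_1,\ldots,i_k\}=\{1,\ldots,n\}$, produces exactly the value at $x_0$ of the $\what{A}$-form $\what{A}(p_1(TM),p_2(TM),\ldots)$, with the normalisation fixed by the definition in the excerpt. Integrating over $M$ yields the claimed identity.

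The main obstacle is the third step: justifying that the heat kernel of the rescaled operator converges (together with sufficiently many derivatives, uniformly on compact sets in the rescaled coordinates) to the Mehler kernel of $H$. This requires Gaussian off-diagonal bounds for $k_t$, a careful rescaled symbol calculus to track how differential operators transform, and the verification that the limiting operator really is $H$ rather than some lower-order perturbation. Once this convergence is in hand the identification with $\what{A}(TM)$ is purely algebraic, powered by Mehler's formula and the Berezin supertrace identity; the earlier steps (McKean--Singer and Lichnerowicz) are essentially formal consequences of the spin structure and ellipticity.
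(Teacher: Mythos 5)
Your proposal is correct and follows essentially the same route as the paper: the heat-equation proof via the McKean--Singer supertrace identity, $t$-independence, and the small-time local index computation. The only cosmetic difference is that you establish the pointwise cancellation via Getzler's rescaling and Mehler's formula, whereas the paper appeals to the Atiyah--Bott--Patodi/Gilkey invariance-theory argument while explicitly citing Getzler's proof as the succinct alternative, so the two presentations coincide in substance.
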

This is the reason $\what{A}(M) \in \Z$ for spin manifolds.   The question of just why this was true was one of the motivations for the Atiyah-Singer Index Theorem.

\medskip\noindent
{\bf Proof of the Lichnerowicz Theorem.}    The bundle $\maS$ has an inner product $ \langle \cdot, \cdot \rangle$ on it.  The operator $\nabla$ has an adjoint $\nabla^*$ defined by  $ \langle  s_1, \nabla_X^*s_2 \rangle = \langle  \nabla_X s_1 , s_2 \rangle$, for $s_1,s_2 \in C^{\infty}(\maS)$.  In \cite{S32, Li63}, Schr\"{o}dinger and, independently, Lichnerowicz proved that the Atiyah-Singer operator $\maD$ and the connection Laplacian $\nabla^*\nabla$ on $\cS$ are related by 
$$
\maD^2 \,\, = \,\, \nabla^\ast\nabla + \frac{1}{4}\kappa.
$$  
Now, suppose $\kappa > 0$ and $\maD(s) = 0$.  Then for all $x \in M$, 
$$ 
0 \,\, = \,\, \langle \maD^2 s, s \rangle (x)  \,\, = \,\, \langle \nabla^\ast\nabla s, s \rangle (x) \,\, + \,\, \frac{1}{4}\kappa(x) \langle s, s \rangle (x) \,\, = \,\, 
$$
$$
\langle \nabla s, \nabla s \rangle (x) \,\, + \,\, \frac{1}{4}\kappa(x) \langle s, s \rangle (x) \,\, \geq \,\, \frac{1}{4}\kappa (x)\langle s, s \rangle (x)\,\, \geq \,\, 0.
$$

\medskip

Since $\kappa (x) > 0$, $\langle s,   s \rangle (x) = 0$ for all $x$.   So $s = 0$, and  $\ker(\maD) =  \ker(\maD_{+}) \oplus  \ker(\maD_{-}) = 0$.    Thus 
$$
\hspace{2.4cm}  \what{A}(M) \,\, = \,\, \Ind(\maD) \,\, = \,\, 0.  \hspace{2.4cm} \qed
$$

\medskip

{\bf Question}:  What about the torus $\T^n$?  Does it admit metrics of PSC?  It is a Lie group, so its tangent bundle is trivial and all of its  characteristic classes are zero, in particular, $\what{A}(\T^n)=0$, and nothing above applies.

\section{Enlargeability and PSC: Gromov \& Lawson's results }\label{GLresults}

Suppose that $E \to M$ is a  $\C$ bundle with a unitary (i.e.\ a $U_k$) connection $\nabla^E$.  Then the operator $\maD$ extends to $\maD_E$ acting on $\maS \otimes E$,  and it is given by 
$$
\maD_E   \,\, = \,\, \sum_{i=1}^n X_i \cdot (\nabla_{X_i} \otimes I + I \otimes \nabla^E_{X_i}),
$$
where $X_1,..., X_n$ is an orthonormal  basis of $TM$.
Denote by $R^E$  the curvature of $\nabla^E$.   For  a section $s \otimes \alpha$ of $\maS \otimes E$,  set
$$
\maR^E (s \otimes \alpha) \,\, = \,\, \frac{1}{2} \sum_{i,j=1}^n (X_i \cdot X_j \cdot s) \otimes R^E_{X_i,X_j}(\alpha).
$$ 
Then the relation $\maD^2 \,\, = \,\, \nabla^\ast\nabla + \frac{1}{4}\kappa$ generalizes to 
 $$
 \maD_E^2 \,\, = \,\, \nabla^*\nabla \,\, + \,\, \frac{1}{4} \kappa \,\, + \,\, \maR^E.
 $$   
 
\begin{definition}\label{contract} A smooth map 
$f:M \to M'$ between Riemannian manifolds is $\ep$ contracting if $||f_*(X)|| \leq \ep ||X||$ for all $X \in TM$.
\end{definition}

\begin{definition}\label{enlarge}
A Riemannian $n$-manifold  is enlargeable if for every  $\ep >0$, there is a metric covering $\what{M}$ of $M$ and   $f_\ep:\what{M} \to \S^n(1)$ (the usual $n$ sphere of radius $1$) which is $\ep$ contracting, constant near $\infty$ (i.e. outside a compact subset), and non-zero degree.
\end{definition}

In a series of papers \cite{GL1, GL2, GL3}, Gromov and Lawson, used the above relation and the notion of enlargeability to prove, among many other things, the following.
\begin{theorem}[Gromov-Lawson]
A  compact  enlargeable spin manifold does not admit any metric of positive scalar curvature.
\end{theorem}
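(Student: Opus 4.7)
The plan is to argue by contradiction along Lichnerowicz lines, twisting the Atiyah--Singer operator by a coefficient bundle pulled back from the sphere via the $\ep$-contracting map. Assume for contradiction that the compact enlargeable spin $n$-manifold $M$ carries a metric with $\kappa \geq \kappa_0 > 0$. Fix once and for all a complex vector bundle $E_0 \to \S^n$ of some rank $r$, equipped with a unitary connection $\nabla^{E_0}$, chosen so that $\int_{\S^n} \what{A}(T\S^n)\,\ch(E_0) \neq 0$; for even $n$ a suitable Bott-type generator works. Write $C$ for a pointwise bound on the norm of the curvature of $\nabla^{E_0}$.

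For each $\ep>0$, enlargeability supplies a Riemannian cover $\what{M}$ and an $\ep$-contracting map $f_\ep : \what{M} \to \S^n$ of nonzero degree, constant near infinity. Lift the metric and spin structure to $\what{M}$ and form $E_\ep := f_\ep^* E_0$ with the pulled-back unitary connection. Because $f_\ep$ is $\ep$-contracting, the curvature satisfies $\|R^{E_\ep}(x)\| \leq C\ep^2$ pointwise, so the Clifford-contracted bundle endomorphism $\maR^{E_\ep}$ entering the twisted Schr\"odinger--Lichnerowicz identity
\[
\maD_{E_\ep}^{\,2} \;=\; \nabla^{*}\nabla \;+\; \tfrac{1}{4}\kappa \;+\; \maR^{E_\ep}
\]
has pointwise operator norm at most $C'\ep^2$ for a universal constant $C'$. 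The lifted scalar curvature on $\what{M}$ is still bounded below by $\kappa_0$, so fixing $\ep$ with $C'\ep^2 < \kappa_0/8$ forces $\maD_{E_\ep}^{\,2} \geq \tfrac{1}{8}\kappa_0\,\Id$ on compactly supported smooth sections. This estimate extends to the self-adjoint closure, so the $L^2$-kernel of $\maD_{E_\ep}$ is trivial and every reasonable index invariant of $\maD_{E_\ep}$ must vanish.

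I would then compute that same invariant topologically. Since $f_\ep$ is constant near infinity, $E_\ep$ is canonically isomorphic to the trivial rank-$r$ bundle $\underline{\C}^{r}$ outside some compact $K_\ep \subset \what{M}$, so $[E_\ep] - [\underline{\C}^{r}]$ represents a compactly supported $K$-theory class. The Gromov--Lawson relative index theorem identifies the relative index of $\maD_{E_\ep}$ with respect to the untwisted operator as
\[
\int_{\what{M}} \what{A}(T\what{M})\,\bigl(\ch(E_\ep) - r\bigr),
\]
and naturality of Chern characters, together with $f_\ep^* : H^*_c(\S^n;\R) \to H^*_c(\what{M};\R)$, reduces this to $\deg(f_\ep)\int_{\S^n}\what{A}(T\S^n)\,\ch(E_0)$, which is nonzero by our choice of $E_0$. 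Combined with the analytic vanishing of the previous paragraph, this is the contradiction.

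The main obstacle is precisely this last identification: on the non-compact cover $\what{M}$ the Lichnerowicz inequality controls only the $L^{2}$-kernel of $\maD_{E_\ep}$, while the contradiction demands that a topologically computed index vanish. Bridging this gap is exactly the content of the Gromov--Lawson relative index theorem, and carrying it out cleanly is the technical heart of the proof. One natural route is to trivialize $E_\ep$ explicitly outside $K_\ep$ and cap off $\what{M}$ along the trivializing collar to reduce to a closed spin manifold on which the classical Atiyah--Singer theorem applies; more in keeping with the later machinery of the present paper, one can instead use the heat-equation/superconnection formulation of the relative index theorem. Once either route is rigorous, the statement ``$0 = \ind_{\mathrm{rel}}(\maD_{E_\ep}) = \deg(f_\ep)\cdot(\mathrm{nonzero})$'' closes the argument.
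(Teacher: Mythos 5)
Your proposal follows the same strategy as the paper's Section on Gromov--Lawson (and as Gromov--Lawson themselves): lift to the cover furnished by enlargeability, twist the Atiyah--Singer operator by $f_\ep^*E_0$, play the curvature bound $\|\maR^{E_\ep}\|\leq C'\ep^2$ against the uniform lower bound $\kappa\geq\kappa_0$ in the Schr\"odinger--Lichnerowicz identity to kill the kernel, and contradict the nonvanishing topological index coming from $\deg(f_\ep)\neq 0$. There is, however, one genuine (though easily repaired) gap: your bundle $E_0\to\S^n$ with $\int_{\S^n}\what{A}(T\S^n)\ch(E_0)\neq 0$ exists only when $n$ is even, since for odd $n$ the integrand, lying in even degrees, has no component in degree $n$ and the integral vanishes; so as written your argument says nothing about odd-dimensional $M$ (e.g.\ $\T^3$). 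The paper disposes of this at the outset by replacing $M$ with $M\times\S^1$, which is still compact, spin, enlargeable and carries PSC; you need the same reduction, and you should say why enlargeability survives the product.

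Beyond that the differences are organizational rather than substantive. For a \emph{compact} cover $\what{M}$ the paper avoids relative index theory altogether: it applies the closed Atiyah--Singer theorem twice, to $\maD_{\what{E}}$ and to the untwisted $\maD$ (whose kernel also vanishes, killing the term $\dim(\what{E})\int_{\what{M}}\what{A}(T\what{M})$), and reads off the contradiction from $\int_{\what{M}}c_n(\what{E})\neq 0$. You instead route everything, compact cover or not, through the Gromov--Lawson relative index of the pair consisting of $\maD_{E_\ep}$ and the operator twisted by the trivial rank-$r$ bundle; that theorem is genuinely needed only in the non-compact case, and, like the paper, you cite it rather than prove it — your suggested heat-equation/superconnection alternative is precisely the path the paper develops later via its bounded-geometry cohomology. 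One small technical caveat in your topological step: since $f_\ep$ is constant near infinity it is not proper, so $f_\ep^*$ does not literally act on compactly supported cohomology; the correct statement is that the Chern--Weil representative of $\ch(E_\ep)-r$ built from the pulled-back connection is compactly supported (the connection is flat outside $K_\ep$), after which $\int_{\what{M}}f_\ep^*\omega=\deg(f_\ep)\int_{\S^n}\omega$ for the top-degree component gives exactly the nonzero number you want.
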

 
As $\T^n$ is enlargeable and spin it does not admit a metric of PSC.  In particular, $\R^n$ is its universal metric cover which has contracting maps to $\S^n(1) $, namely send the open $N$ ball $||x|| < N$  one to one and onto $\S^n(1) - (1,0,0,...,0)$, and $||x|| \geq N$ to $(1,0,0,...,0)$.  It is spin since all its characteristic classes, including its Stiefel-Whitney classes, are zero.

\medskip
\begin{proof}
Assume $M$ is enlargeable and has PSC.  We may also assume that $M$ has even dimension $2n$, for if not we may replace it by $M \times \S^1$, with the product of the metric on $M$ with the usual metric on $\S^1$.   If $M$ has PSC, so does $M \times \S^1$.  

\medskip
Suppose that  $\what{M}$ is a compact covering of $M$  and $f_{\ep}:\what{M} \to \S^{2n}$ is $\ep$ contracting, with $\ep$ to be determined.  Then $\what{M}$ is spin,  since the Stiefel-Whitney classes of a cover are the pull-back of those of the base, and it  also has PSC, since that is a local property.   We denote the scalar curvature of $\what{M}$ by $\kappa$ also since it is the pull back of $\kappa$  on $M$.  Choose  a $\C$ bundle $E \to \S^{2n}$ with  $\dd \int_{\S^{2n}}  c_n(E) \neq 0$.  This is possible because $\ch:K^0(\S^{2n})\otimes \R \to H^{*}(\S^{2n};\R)=  H^{0}(\S^{2n};\R) \oplus H^{2n}(\S^{2n};\R))= \R \oplus \R$ is an isomorphism.  Note that all the Chern classes of $E$ are zero,  except $c_0(E) = \dim(E)$ and $c_n(E)$, since $H^k(\S^{2n};\R)= 0$ for $k \neq 0, 2n$.   It turns out that  $\ch(E)=\dim(E) + \frac{1}{(n-1)!} c_n(E)$.   Set $\what{E} = f_{\ep}^\ast(E)$.  Since $f_{\ep}$ has non-zero degree, 
$$
\int_{\what{M}}  c_n(\what{E})  \,\, = \,\,  \int_{\what{M}}  c_n(f_{\ep}^*(E))  \,\, = \,\,  \dd \int_{\what{M}} f_{\ep}^* (c_n(E)) \,\, = \,\, \deg(f_{\ep})\dd \int_{\S^{2n}} c_n(E)  \neq 0.
$$
A direct calculation gives $|| \maR^{\what{E}}|| \leq C \ep^2$, where $C \in \R_+$ depends only on $E$ and the dimension of $M$, \cite{LM} p.\ 307.  Since $M$ is compact and $\kappa$ is smooth (and so uniformly positive), we can require $\ep$ to be  so small that $\frac{1}{4}\kappa +  \maR^{\what{E}} >0$.  Choose $c > 0$ so that $c\I  \leq  \frac{1}{4}\kappa + \maR^{\what{E}}$ as an operator on $\what{\maS} \otimes \what{E}$.

Now suppose that $\varphi \in \ker(\maD_{\what{E}})$ and $\varphi \neq 0$.  We may assume $||\varphi|| = 1$.  Then
$$
0 \,\, = \,\, ||\maD_{\what{E}}^2(\varphi)||    \,\, = \,\,  ||\maD_{\what{E}}^2(\varphi)|| \,   ||\varphi||   \,\, \geq \,\,  | \langle \maD_{\what{E}}^2(\varphi), \varphi \rangle  |
\,\, = \,\, 
| \int_{\what{M}} \langle \nabla^*\nabla\varphi , \varphi \rangle \,\, + \,\, 
\int_{\what{M}} \langle (\frac{1}{4}\kappa \,\, + \,\, \mathcal{R}^{\what{E}})\varphi, \varphi \rangle | \,\, \geq \,\, 
$$
$$
\int_{\what{M}} ||\nabla\varphi||^2 \,\, + \,\, 
\int_{\what{M}} \langle c\I\varphi, \varphi \rangle 
\,\, = \,\,  
\int_{\what{M}} ||\nabla\varphi||^2 \,\, + \,\, c\int_{\what{M}}||\varphi||^2  \,\, \geq \,\, c \,\, > \,\, 0,
$$
an obvious contradiction.   
So $\ker(\maD_{\what{E}})=0$.
In this  case Atiyah-Singer gives
$$
0 \,\, = \,\,   \Ind(\maD_{\what{E}}) \,\, = \,\, \int_{\what{M}} \what{A}(T\what{M}) \ch(\what{E}).
$$

Just as above, we have $\ker(\maD)=0$  for $\maD$ on $\what{M}$, so also
$\dd \int_{\what{M}} \what{A}(T\what{M})= \ind(\maD) = 0$.  Now, the cohomology classes $\what{A}(T\what{M}) = 1 + $ higher order terms and $\ch(\what{E}) = \dim(\what{E}) + \frac{1}{(n-1)!}c_n(\what{E})$.   Thus,
$$
0 \,\, = \,\, \int_{\what{M}} \what{A}(T\what{M}) \ch(\what{E})  \,\, = \,\, \dim(\what{E})\int_{\what{M}} \what{A}(T\what{M}) \,\, + \,\,   \frac{1}{(n-1)!} \int_{\what{M}} c_n(\what{E})  \,\, = \,\, 0 \,\, + \,\, \,\,    \frac{1}{(n-1)!} \int_{\what{M}} c_n(\what{E}) \,\, \neq \,\,  0,
$$
a contradiction.   

If $\what{M}$ is not compact, Gromov-Lawson employ their relative index theorem to achieve the same end.
\end{proof}

The cohomology for bounded geometry manifolds below gives a different path to the proof in the non-compact case}.  This gives us a tool to replace methods on manifolds which do not extend to foliations in general, e.g., Gromov-Lawson's relative index theory, which are crucial to the proof of Theorem \ref{BHspin}.  

\section{A cohomology for bounded geometry manifolds}

In this and the next section, we consider the case of $M$ being a Riemannian manifold of bounded geometry.   Bounded geometry means that  the injectivity radius is bounded below (essentially how far you can go before starting to come back), and that the curvature and all of its covariant derivatives are uniformly bounded (by a bound depending on the order of the derivative).  This class of manifolds includes all compact manifolds and their covers as well as any leaf (and its covers) of any foliation of a compact manifold.  There are also examples of such manifolds which cannot be a leaf of a foliation of any compact manifold, \cite{AH96, PS81}.  

A lattice $T  = \{x_i  \in M\}$ is a  countable set so that: there is $r > 0$, so that distinct elements are at least a distance $r$ apart;  $\cU = \{B(x_i, r)\}$ is a locally finite cover by the open balls $B(x_i, r)$ of radius $r$;   each  $B(x_i, r)$ is  contained in a coordinate chart;  the Lebesgue number of the cover $\cU$ is positive, i.e. there is $\lambda >0$ so that  every $B(x, \lambda)$, $x \in M$, is  a subset of some  $B(x_i, r)$.

Manifolds of  bounded geometry  always admit lattices for some $r > 0$, and bounded geometry implies that there is an integer $\ell$ so that any element of $\cU$  intersects at most $\ell$ other elements of $\cU$ non-trivially. 

For a lattice $T$, let   $\maB({T})$  be  the set of all bounded functions $f:T \to \R$. Denote by $V \subset \maB({T})$, the vector subspace  generated by elements of the form $f_{i,j}$, where $f_{i,j}(x_i) = 1$, $f_{i,j}(x_j) = -1$, and $f_{i,j}(x_k) = 0$ otherwise.  We need to take care as to what ``... the vector subspace generated by ..." means.    Denote by $d( \cdot, \cdot)$ the distance function on $M$. 

An element $f \in V$ is a possibly infinite sum  $f = \sum_{i,j}a_{i,j}f_{i,j}$, $a_{i,j} \in \R$, so that  there is a positive constant $C \in \R$, depending on $f$,  with
$$
a_{i,j} = 0 \text{  if  }d(x_i, x_j) > C   \quad   \text{and}  \quad      |a_{i,j} | \leq  C   \text{   for all   }  i,j.
$$ 
Note that the first condition implies that there is an integer $\what{\ell}$, so that for each $i$ the number of $a_{i,j} \neq 0$ is less than $\what{\ell}$.   
$\overline{V}$ is the closure of $V$ under the sup norm.  
\medskip
The BG (bounded geometry) cohomology of $(M,T)$ is  
$$
H^0_b(M/TM) =   \maB({T})/ \overline{V}.
$$    

\medskip
One way to think about this cohomology is that it consists of monetary ``states" in the sense that each point $x_i$ has a (globally bounded, both positively and negatively) bank account, and the state is unchanged by transfers among the accounts, that is $v \in  \overline{V}$, provided that there is a global bound (depending on $v$) on the size of each transfer made by $v$  and on how far apart the accounts  involved in any transfer made by $v$ are. In particular, $a_{i,j}f_{i,j}$ ``transfers" $a_{i,j}$ from $x_j$ to $x_i$.

\medskip
A priori, $H^0_b(M/TM) $ depends on the choice of lattice $T$.  However, this is not the case.

\begin{theorem}
$H^0_b(M/TM)$ does not depend on $T$, nor does it depend on $r$.
\end{theorem}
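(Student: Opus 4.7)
The plan is to construct inverse isomorphisms at the cochain level via pushforwards. Independence from $r$ is essentially immediate, since the subspace $V \subset \maB(T)$ depends only on the point set $T$ and the ambient metric $d$, not on the spacing parameter $r$, which merely certifies that $T$ qualifies as a lattice. The real content is independence from $T$.

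Given two lattices $T, T'$ with respective spacings $r, r'$, the positive Lebesgue number of the cover associated to $T'$ lets me pick a map $\phi: T \to T'$ with $d(x, \phi(x)) \leq r'$; bounded geometry ensures that each fiber $\phi^{-1}(y) \subseteq B(y, r') \cap T$ has cardinality bounded by a uniform $N$. A parallel construction yields $\psi: T' \to T$. Then I define the sup-norm-bounded pushforwards
\begin{equation*}
(\phi_* f)(y) \,=\, \sum_{x \in \phi^{-1}(y)} f(x), \qquad (\psi_* g)(x) \,=\, \sum_{y \in \psi^{-1}(x)} g(y),
\end{equation*}
each bounded in sup norm by $N$, hence continuous for the topology in which $\overline V$ is taken.

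The key step is to verify $\phi_*(V) \subseteq V'$, so that by continuity $\phi_*(\overline V) \subseteq \overline{V'}$ and $\phi_*$ descends to cohomology. Rearranging the formal sum yields $\phi_*\bigl(\sum a_{p,q} f_{p,q}\bigr) = \sum_{\alpha,\beta \in T'} B_{\alpha,\beta}\, f^{T'}_{\alpha,\beta}$ with $B_{\alpha,\beta} = \sum_{\phi(p)=\alpha,\,\phi(q)=\beta} a_{p,q}$; the fiber bound $N$ controls $|B_{\alpha,\beta}|$, while the triangle inequality $d(\alpha, \beta) \leq d(\alpha, p) + d(p,q) + d(q, \beta) \leq 2r' + C$ confines the support. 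To see that $\phi_*$ and $\psi_*$ are inverse on cohomology, set $\tau = \psi \circ \phi$ so $d(x, \tau(x)) \leq r + r'$; the element $u = \sum_{x \in T} f(x)\, f_{x,\tau(x)}$ plainly lies in $V$ (coefficients bounded by $\|f\|_\infty$, supports of diameter $\leq r + r'$), and a pointwise evaluation collapses it to $u = f - \psi_* \phi_* f$, so $\psi_* \phi_* \equiv \Id$ on the quotient; symmetrically for the other composition.

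The main delicate point is the verification $\phi_*(V) \subseteq V'$: one must ensure that the doubled infinite sum defining $B_{\alpha,\beta}$ genuinely meets the bounded-magnitude and bounded-support axioms built into the definition of $V'$. This is where the bounded-geometry hypothesis enters essentially, through uniform control on the fiber cardinalities and displacements of $\phi$ and $\psi$; without such uniformity, the natural pushforward would fail to land in $V'$ at all. Once that is in place, independence of the isomorphism from the auxiliary choices of $\phi$ and $\psi$ is a short verification of the same flavor, so that $H^0_b(M/TM)$ is well-defined intrinsically.
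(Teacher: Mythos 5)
Your argument is correct, and it is organized differently from the paper's. The paper compares the two lattices $T_1$ and $T_2$ by forming the union $T = T_1 \cup T_2$, observing that although $T$ need no longer be a lattice (centers may be arbitrarily close), it retains the uniform finiteness property (an $\ell$ bounding the number of nearby centers) so that $\maB(T)/\overline{V}$ still makes sense, and then exhibiting $\maB(T_1)/\overline{V_1} \simeq \maB(T)/\overline{V} \simeq \maB(T_2)/\overline{V_2}$, with the two comparisons left as an exercise. You instead build explicit chain-level transfer maps directly between the two lattices: a nearest-center assignment $\phi: T \to T'$ with uniformly bounded displacement and uniformly bounded fibers (the packing bound from bounded geometry), the pushforward $\phi_*$, the verification $\phi_*(V) \subseteq V'$ by the coefficient bound $N^2 C$ and the triangle-inequality support bound $C + 2r'$, and the explicit element $u = \sum_x f(x) f_{x,\tau(x)} \in V$ witnessing $f - \psi_*\phi_* f \in V$, so that the induced maps are mutually inverse on the quotients. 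Both proofs rest on the same mechanism --- bounded-distance, bounded-multiplicity transfers are precisely the elements of $V$ --- but your route has the advantage of never leaving the class of lattices, of working verbatim for lattices with different spacing parameters (so independence of $r$ comes for free, as you note, since $V$ only sees $T$ and the metric), and of producing explicit maps whose independence of the choice of $\phi$ is itself witnessed by an element of $V'$; the paper's route buys a shorter statement at the cost of extending the definition to non-lattice point sets and still needing, inside the ``easy exercise,'' essentially the transfer construction you wrote out. Two cosmetic points: the operator $\phi_*$ is bounded in operator norm by $N$ (not in sup norm by $N$), and you only need that the balls $\{B(y,r')\}_{y\in T'}$ cover $M$ to define $\phi$, not the Lebesgue number.
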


\begin{proof}  Let $T_{1}$ and $T_{2}$ be two lattices for $M$ and set $T = T_{1} \cup T_{2}.$  Now $T$ may not be a lattice as the centers of distinct balls may be arbitrarily close, but there is still an $\ell$ so that each element of $T$ meets at most $\ell$ other elements of $T$.  It addition,  $\maB({T})/ \overline{V}$ still makes sense.  With these observations, it is an easy exercise to show
$$
\maB(T_1)/ \overline{V_1}    \simeq    \maB({T})/ \overline{V} 
\simeq \maB(T_2)/ \overline{V_2}.
$$ 
Similarly, one can show that $H^0_b(M/TM)$ does not depend on $r$. 
\end{proof}


\section{A trace and an index theorem for bounded geometry manifolds}  \label{trace}

{Denote by $\maB(M)$ the bounded measurable functions on $M$. Suppose that  $M$ is oriented, then}  there is a surjective linear map  $\dd \int_F :  \maB(M) \to H^0_b(M/TM)$. This map is given by choosing a partition of unity $\{\phi_i\}$ subordinate to the cover $\cU$ associated to $T$, and setting 
$$
(\int_{F} f)(x_i)  \,\, = \,\,   {\left[\int_{B(x_i,r)} \phi_i (x) f(x)  dx\right]},
$$
where $dx$ is the volume form on $M$ and where $[\bullet]$ denotes the class of the bounded function $x_i\mapsto \int_{B(x_i,r)} \phi_i (x) f(x)  dx$ in $H_b^0(M/TM)$.  {In this section, $F=TM$ stands for the trivial top-dimensional foliation of $M$ with one leaf, we give the similar definition for general foliations in the next section}. Note that each integration $f \to \dd  \int_{B(x_i,r)}  \phi_if dx $ is essentially integration over a compact fibration, so $\dd \int_F$ satisfies the Dominated Convergence Theorem.  It is independent of the choice of $T$ and the partition of unity because it takes values in $H^0_b(M/TM)$. 

For any bundle $E \to M$, denote by $\maB(E)$ the bounded measurable sections of $E$. 
Denote by  $E \otimes E^{*}$ the bundle $\pi_{1}^{*}(E) \otimes \pi_{2}^{*}(E^*)$ over $M \times M$, where $\pi_{1}, \pi_{2}: M \times M \to M$  are the two projections, and $E^*$ is the dual bundle of $E$.  For any $k \in \maB(E \otimes E^*)$, $k(x,y)$ is a linear map  from $E_y$ to $E_x$, so  $k(x,x)$  has a well defined trace $\tr(k(x,x))$,  and  $\tr(k) \in \maB(M)$.  The BG trace $\tr_b(k)$ of $k$ is the  BG cohomology class 
$$
\tr_b(k) \, = \, \int_F \tr(k).
$$

\medskip
Denote by $C^{\infty}_{s}(E \otimes E^{*})$ the set of $k \in C^{\infty}(E\otimes E^{*})$ so that there is  $s > 0$ with $k(x,y)=0$ for all $(x,y)$ with $d(x,y) \geq s$.  In addition we require that $k$ and all its derivatives be bounded on $M \times M$.  Any $k \in C^{\infty}_{s}(E \otimes E^{*})$ defines a bounded smoothing  operator $K$ with {finite propagation} on $L^{2}(E)$, the $L^{2}$ sections of $E$.  It is given by
$$
K(\varphi)(x) \,\, = \,\, \int_M k(x,y)\varphi(y) dy, 
$$ 
and $\tr(k(x,x))$ is a bounded smooth function on $M$.  

\medskip

Let $k_{1}(x,y) \in \maB(E \otimes E^{*})$ and $k_{2}(x,y) \in C^{\infty}_{s}(E \otimes E^{*})$, and define $k_{1}\circ k_{2}$ by
$$
k_{1}\circ k_{2}(x,y) = \int_{M} k_{1}(x,z) k_{2}(z,y) \, dz,
$$
and similarly for $k_{2}\circ k_{1}$.  It is immediate that $k_{1}\circ k_{2}$ and $k_{2}\circ k_{1}$ are elements of $\maB(E \otimes E^{*})$.

\medskip

A central result of the \cite{H02} is that  $\tr_b$  is actually a trace in the usual sense.
\begin{theorem}\label{main}
Suppose $k_1 \in \maB(E \otimes E^*)$, 
and $k_2  \in C^{\infty}_s(E \otimes E^*)$.  Then
$$
\tr_b (k_1\circ k_2) \,\, =  \,\, \tr_b (k_2 \circ k_1).
$$
\end{theorem}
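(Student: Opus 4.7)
The strategy is to reduce the identity to a statement about the equivalence class in $H^0_b(M/TM)$, and then to explicitly produce the difference as an element of $V$ by a variable-swap in a double integral combined with the partition of unity. Set $F(x,z) := \tr(k_1(x,z)\, k_2(z,x))$. By the pointwise cyclicity of the matrix trace, $F(x,z) = \tr(k_2(z,x)\, k_1(x,z))$, and $F$ is bounded and supported in the strip $\{(x,z) : d(x,z) \leq s\}$ (since $k_2$ has propagation at most $s$).

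First I would compute the representatives of $\tr_b(k_1 \circ k_2)$ and $\tr_b(k_2 \circ k_1)$ at a lattice point $x_i$ using the same partition of unity $\{\phi_i\}$ subordinate to $\cU$. Using cyclicity pointwise and then Fubini (justified because the integrand is supported in a strip of uniformly bounded width in the bounded geometry setting), the swap of dummy variables $(x,z) \leftrightarrow (z,x)$ in the double integral for $\tr_b(k_2 \circ k_1)$ yields
$$
\big(\tr_b(k_1 \circ k_2) - \tr_b(k_2 \circ k_1)\big)(x_i) \,=\, \int_{M \times M} \bigl(\phi_i(x) - \phi_i(z)\bigr)\, F(x,z)\, dz\, dx \,=:\, a_i .
$$
Then, inserting $1 = \sum_j \phi_j(z)$ in the first half and $1 = \sum_j \phi_j(x)$ in the second half, and setting
$$
b_{i,j} \,:=\, \int_{M \times M} \phi_i(x)\, \phi_j(z)\, F(x,z)\, dx\, dz,
$$
one gets $a_i = \sum_j (b_{i,j} - b_{j,i})$. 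Thus the candidate transfer $v := \sum_{i,j} b_{i,j}\, f_{i,j} \in \maB(T)$ satisfies $v(x_k) = \sum_j b_{k,j} - \sum_i b_{i,k} = a_k$, so that $v$ represents the difference $\tr_b(k_1 \circ k_2) - \tr_b(k_2 \circ k_1)$ in $\maB(T)$.

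The remaining step, which I regard as the main technical point, is to verify that $v \in V$ (hence in $\overline{V}$). For the propagation bound, note that $\phi_i\phi_j$ vanishes outside $B(x_i,r) \times B(x_j,r)$ and $F(x,z) = 0$ outside $\{d(x,z) \leq s\}$, so $b_{i,j} = 0$ once $d(x_i, x_j) > 2r + s$, giving the constant $C = 2r+s$ required by the definition of $V$. For the uniform bound $|b_{i,j}| \leq C$, combine $\|F\|_\infty < \infty$ (from $\|k_1\|_\infty, \|k_2\|_\infty < \infty$ and rank finiteness of $E$) with the bounded geometry estimate giving a uniform upper bound on $\vol(B(x_i, r))$. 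This lands $v$ in $V$, so $a = v = 0$ in $\maB(T)/\overline{V} = H^0_b(M/TM)$, which is exactly the claim. The only subtleties beyond bookkeeping are (i) justifying Fubini for the variable swap, which is immediate from the strip support, and (ii) recognizing the $\phi_i \phi_j F$ decomposition as a transfer; once these are in place the theorem follows.
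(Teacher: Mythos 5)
Your proof is correct, and it is essentially the argument the paper intends (the paper defers to \cite{H02}, but its own characterization of elements of $V$ as bounded, finite-propagation transfers $g(x_i,x_j)$ with prescribed row and column sums is exactly what your kernel $b_{i,j}=\int\int\phi_i(x)\phi_j(z)\tr\bigl(k_1(x,z)k_2(z,x)\bigr)\,dx\,dz$ realizes). The cyclicity-plus-variable-swap step, the propagation bound $d(x_i,x_j)\leq 2r+s$, and the uniform bound on $|b_{i,j}|$ via bounded geometry are precisely the points the intended proof rests on, so nothing essential is missing.
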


This property of $\tr_b$  {allows} one to extend classical results from compact manifolds to manifolds of bounded geometry.   In particular,  the Atiyah-Singer index theorem 
$$
\Ind(\maD_{\what{E}}) \,\, = \,\, \int_{\what{M}} \what{A}(T\what{M}) \ch(\what{E}) 
$$ 
of Section \ref{GLresults} extends to non-compact spin manifolds of bounded geometry.   More specifically, 
let $\maD_{E}$ be the generalized Atiyah-Singer operator associated to $E$ acting on $L^2(E)$, and $P_{E,\pm}$ the projections onto the $\ker(\maD_{E, \pm})$.  Set  
$$
\Ind(\maD_{E}) \,\, = \,\,  \tr_b(P_{E,+}) \,\, - \,\, \tr_b(P_{E,-}) \,\, \in \,\, H^0_b(M/TM).
$$

\begin{theorem}[\cite{H02}, Theorem $7$]\label{IndexJames}  Let $M$ be an even dimensional 
spin manifold of bounded geometry.   Then 
$$
\Ind(\maD_{E}) \,\, =  \,\,   \int_F \widehat{A}(M)  \ch(E)   \,\, \in \,\, H^0_b(M/TM).
$$
\end{theorem}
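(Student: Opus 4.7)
The plan is to carry out the heat–kernel proof of the Atiyah--Singer index theorem with the BG trace $\tr_b$ of Section~\ref{trace} playing the role of the ordinary operator trace. The central identity to establish is the McKean--Singer formula in BG cohomology,
$$
\tr_b(P_{E,+}) - \tr_b(P_{E,-}) \,\,=\,\, \tr_b^{s}\bigl(e^{-t\maD_E^{2}}\bigr) \qquad \text{for every } t>0,
$$
where $\tr_b^{s}$ is the supertrace variant (with sign coming from the $\Z_{2}$-grading on $\cS\otimes E$). Bounded geometry together with ellipticity of $\maD_E$ gives Gaussian off-diagonal estimates $|p_{t}(x,y)|\le Ct^{-n/2}e^{-d(x,y)^{2}/Ct}$ on the heat kernel $p_t$ that are uniform in $x$; in particular $\tr_s\bigl(p_t(x,x)\bigr)\in\maB(M)$ and $\int_{F}\tr_s(p_t)\in H^{0}_{b}(M/TM)$ is well defined.

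First I would establish McKean--Singer. Using functional calculus, split $e^{-t\maD_E^{2}}=P_{E}^{\ker}+R_t$, where the remainder $R_t$ can be written as a super-commutator $[\maD_E,G_t]_{s}$ with $G_t$ a smoothing operator whose kernel has Schwartz-type off-diagonal decay. An extension of Theorem~\ref{main} to the class of kernels with Gaussian decay (not merely finite propagation) then shows $\tr_b^{s}(R_t)=0$, so only the kernel part survives and yields the claimed identity. This extension is the main analytic obstacle: the trace property in Theorem~\ref{main} is stated for $k_2\in C^{\infty}_{s}(E\otimes E^{*})$, whereas heat kernels are not compactly supported off the diagonal. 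I would handle this by approximating $e^{-t\maD_E^{2}}$ by finite-propagation smoothing operators via the finite-propagation-speed cutoff $\chi(s/R)\cos(s\maD_E)$ applied to the Fourier representation of the heat kernel, using bounded geometry to secure uniform-in-$x$ Fubini estimates controlling the error as $R\to\infty$.

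Once McKean--Singer is in force, $\tr_b^{s}\bigl(e^{-t\maD_E^{2}}\bigr)$ is independent of $t$, and I would take $t\to 0^{+}$. Getzler's rescaling, which is a purely local computation on a geodesic ball and hence transports unchanged to the bounded geometry setting, gives the pointwise asymptotic
$$
\lim_{t\to 0^{+}}\tr_{s}\bigl(p_{t}(x,x)\bigr)\,\dvol(x) \,\,=\,\, \bigl[\,\what{A}(TM)\wedge\ch(E)\,\bigr]_{\mathrm{top}}(x),
$$
with error estimates uniform in $x$ thanks to bounded geometry of all covariant derivatives of the curvature and of $\nabla^{E}$. Finally, the map $\int_{F}$ satisfies dominated convergence (Section~\ref{trace}), so passing the limit through it yields
$$
\Ind(\maD_E) \,\,=\,\, \tr_b(P_{E,+})-\tr_b(P_{E,-}) \,\,=\,\, \int_{F}\what{A}(M)\,\ch(E) \,\,\in\,\, H^{0}_{b}(M/TM),
$$
as required. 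The hard step, as emphasized above, is the upgrade of Theorem~\ref{main} to rapidly decaying kernels; everything else is an almost mechanical transfer of the compact Atiyah--Singer proof, with $\int_{F}$ replacing integration against the fundamental class.
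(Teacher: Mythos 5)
Your overall strategy (heat kernel, BG trace, finite--propagation approximation of $e^{-t\maD_E^2}$ via the wave operator to extend Theorem \ref{main}, Getzler rescaling with bounds uniform in $x$, dominated convergence for $\int_F$) is the right framework and matches the spirit of the paper's proof, but your central step contains a genuine gap. You claim the exact McKean--Singer identity $\tr_b(P_{E,+})-\tr_b(P_{E,-})=\tr_b^{s}(e^{-t\maD_E^2})$ for \emph{every} $t>0$, proved by writing $e^{-t\maD_E^2}-P_{\ker}$ as a supercommutator $[\maD_E,G_t]_s$ with $G_t$ smoothing and rapidly decaying. Any such $G_t$ of the form $\tfrac12\maD_E\,h(\maD_E^2)$ forces $h(\lambda)=e^{-t\lambda}/\lambda$ on the nonzero spectrum, which is unbounded at $\lambda=0$. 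Since $M$ is non-compact, $\maD_E^2$ in general has continuous spectrum reaching down to $0$ (no gap), so $G_t$ is not even a bounded operator, let alone one with a Gaussian-type kernel to which an extended trace property could apply. If you instead cut the spectrum at $\ep$, the supercommutator argument leaves the error term $\tr_b^{s}\bigl(E_{(0,\ep)}e^{-t\maD_E^2}\bigr)$, and nothing you have justifies that it vanishes as $\ep\to 0$: $\tr_b$ is only known to be tracial against smooth finite-propagation kernels and is not known to be ``normal'' in the von Neumann sense, and the paper explicitly warns that limits need not commute with functionals on $H^0_b(M/TM)$. Controlling exactly this low-spectrum contribution is what forces the transverse regularity and Novikov--Shubin hypotheses in the foliated versions (Theorems \ref{HLBH}, \ref{HLBH2}, \ref{BHspin2}), so it cannot be waved through here.

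The proof the paper relies on (from \cite{H02}) avoids the finite-$t$ identity altogether: one shows the Haefliger/BG class of the heat supertrace is independent of $t$ by differentiating in $t$, where the derivative is the supertrace of a supercommutator built from $\maD_E e^{-t\maD_E^2}$ --- whose spectral function is bounded at $0$ and whose kernel does have Gaussian decay, so the (suitably extended) trace property legitimately applies --- then computes the $t\to 0$ limit by the local index theorem (this part of your argument is fine), and finally identifies the $t\to\infty$ limit with $\Ind(\maD_E)=\tr_b(P_{E,+})-\tr_b(P_{E,-})$, a convergence that is only pointwise and must be handled with care in $H^0_b$. So to repair your proposal you should replace the claimed exact McKean--Singer identity by the $t$-derivative/transgression argument and supply the $t\to\infty$ analysis; as written, the key analytic step fails precisely where the non-compactness matters.
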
 

The proof consists in showing that $\tr_b(e^{-t\maD_{E}^2})   \in H^0_b(M/TM) $ is independent of $t$, and then showing that $\lim_{t \to 0} \tr_b(e^{-t\maD_{E}^2}) =  \dd \int_F \widehat{A}(M)  \ch(E)$ and $\lim_{t \to \infty } \tr_b(e^{-t\maD_{E}^2}) = \Ind(\maD_{E})$.   The reader should note that this last convergence  is pointwise and not necessarily
uniform.   A standard way to obtain invariants in this situation is to apply a linear functional to $H^0_b(M/TM)$.   In doing so, one needs to check that the functional commutes with taking limits.  This does not happen in general.  For an explicit 
counter example see  \cite{Roe:1988II}, and for a systematic discussion, \cite{GI:2000}.
There are similar theorems for the other classical operators.  In addition, the celebrated Lefschetz theorem in \cite{AB:1967,  AB:1968} also extends to manifolds of bounded geometry.  See \cite{H02} for details. 


\section{Foliations}

A foliation of a manifold is a sub-bundle $TF$ of $TM$ whose sections are closed under the bracket operation.  That is, if $X_1, X_2 \in C^{\infty}(TF)$, then $[X_1, X_2] \in C^{\infty}(TF)$.  Suppose the fiber dimension of $TF$ is $p$.  Then $TF$ is the tangent bundle of a partition $F$ of $M$ into disjoint $p$ dimensional sub-manifolds called the leaves of $F$.  Locally this structure is trivial, that is $M$ has a covering by coordinate charts  so that on each chart $U$ the picture is
\begin{center}
\begin{picture}(300,220) 

\put(85,210){$U\simeq \D^p(1)   \times T  \simeq \D^p(1)   \times  \D^q(1)$} 
\put(90,35){\framebox(130,155)} 

\green{
\put(65,180){\rule{62mm}{0.1mm}} 
\put(65,156){\rule{62mm}{0.1mm}} 
\put(65,134){\rule{62mm}{0.1mm}} 
\put(65,112){\rule{62mm}{0.1mm}} 
\put(65,91){\rule{62mm}{0.1mm}} 
\put(65,69){\rule{62mm}{0.1mm}} 
\put(65,47){\rule{62mm}{0.1mm}} 
}

{
\put(150,35){\rule{0.5mm}{54.45mm}} 
\put(130,97){$T$}
\put(87,15){\rule{46mm}{0.5mm}} 
\put(61,13){$\D^p(1)$} 
 }

\end{picture}
\end{center}

The notation $\D^p (r)=\{x \in \R^p \, | \, ||x||  <r\}$, and similarly for $\D^q(r)$.  The transversal $T \simeq  \{0\} \times \D^q(1)$  has dimension $q = n-p$, the co-dimension of $F$.  Coordinates on $U$ are given by $(x_1,...,x_p)$ on $\D^p(1)$, the leaf coordinates,  and $(y_1,...,y_q)$ on $\D^q(1)$, the transverse coordinates.

An open locally finite cover  $\{(U_i, \psi_i)\}$ of $M$  by  coordinate charts $\psi_i: U_i \to  \D^p(1) \times \D^q(1) \subset \R^n$  is a good cover for $F$ provided that
\begin{enumerate}
 \item 
For each $y \in \D^q(1), P_y = \psi_i^{-1}(\D^p(1) \times  \{y\})$ is contained in a leaf of $F$. $P_y$ is called a plaque of $F$.
\smallskip
\item 
If $\overline{U}_i \cap  \overline{U}_j \neq \emptyset$,  then $U_i \cap  U_j \neq \emptyset$, and $U_i \cap  U_j$ is connected.
\smallskip
\item 
Each $\psi_i$ extends to a diffeomorphism $\psi_i: V_i \to \D^p(2) \times \D^q(2)$, so that the cover $\{(V_i, \psi_i)\}$  satisfies $(1)$ and $(2)$, with $\D^p(1)$ replaced by $\D^p(2)$.
\smallskip
\item 
Each plaque of $V_i$  intersects at most one plaque of $V_j$  and a plaque of $U_i$ intersects a plaque of $U_j$ if and only if the corresponding plaques of $V_i$ and $V_j$ intersect.
\smallskip
\item
There are global positive upper and lower bounds on the norms of each of the derivatives of the $\psi_i$.
\smallskip
\item The  set $\maT = \bigcup_i \{ \psi_i^{-1}((0,0)) \}$ is a lattice for $M$. 

\end{enumerate}
Good covers always exist on compact manifolds, as well as manifolds of bounded geometry.  In addition to assuming that $M$ is of bounded geometry, we assume that the leaves of $F$ are uniformly of bounded geometry.  

While every foliation is quite simple locally, its global structure can be quite complicated:  leaves can be dense; all the leaves of the foliation can be compact sub-manifolds, but without a bound on their volumes; leaves can limit on themselves in interesting ways (called resiliant leaves); leaves which are close in one chart, may diverge greatly as they move through other charts.  This last means the the homotopy graph (see below) of the foliation may not be a fibration, which makes doing analysis on it difficult, but not insurmountable.

One of the most famous foliations is the Reeb foliation of  $\S^3$.  The three sphere is the union of two solid tori, where they are glued together along their boundaries, the two torus $\T^2$, by a diffeomorphism which interchanges the two  obvious generators of $\pi_1(\T^2)$.  The solid tori are foliated as in the following picture.  The interior leaves are $\R^2$s and the boundary $\T^2$ is also a leaf.  

\begin{center}
\includegraphics[scale=0.620]{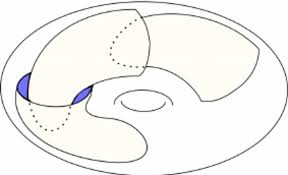}
\end{center}

One of the important  features of this foliation of $\S^3$ is that its homotopy graph (see below) is not Hausdorff.  

For more on foliations,  see \cite{L74} and \cite{CC00}. 

\section{Holonomy and Haefliger cohomology for foliations}\label{Haef}

The holonomy of a foliation associates to a leafwise path $\gamma:[0,1] \to M$ a local map $h_{\gamma}$ from any transversal through $\gamma(0)$ to any transversal through $\gamma(1)$. See the picture below for the special case where $\gamma(0) \in T_i$ and $\gamma(1) \in T_j$.  It takes the point $z \in T_i$ to the point $h_{\gamma}(z) \in T_j$ which is obtained by sliding $z$ along a path in its leaf parallel to $\gamma$.

\begin{center}
\begin{picture}(320,180) 

\put(73,170){$U_i$} 
\put(10,45){\framebox(130,115)} 

\put(263,165){$U_j$} 
\put(200,30){\framebox(130,110)} 

\green{
\put(0,150){\rule{120mm}{0.1mm}} 
\put(0,126){\rule{120mm}{0.1mm}} 
\put(0,104){\rule{120mm}{0.1mm}} 
\put(0,82){\rule{120mm}{0.1mm}} 
\put(0,61){\rule{120mm}{0.1mm}} 
\put(0,39){\rule{120mm}{0.1mm}} 
}

\put(79.5,103){$z$}
\put(73.528,102){$\bullet$}  
\put(270,103){$h_{\gamma}(z)$}
\put(263.55,102){$\bullet$} 

{
\put(75,45){\rule{0.5mm}{40.5mm}} 
\put(80,50){$T_i$} 
\put(265,30){\rule{0.5mm}{38.7mm}}
\put(270,45){$T_j$}
}

{
\put(21,64){$\alpha(0)$} 
\put(238,64){$\alpha(1)$}
\put(170,64){$\alpha$}
\put(40,59){$\bullet$}
\put(231,59){$\bullet$}  
\put(43.528,60.75){\rule{66mm}{0.4mm}}
}

{
\put(48,130){$\gamma(0)$} 
\put(264,130){$\gamma(1)$}
\put(168,132){$\gamma$}
\put(67,124.3){$\bullet$}
\put(257,124.3){$\bullet$} 
\put(70,126){\rule{66mm}{0.4mm}}
}

{
\put(164,92){$h_{\gamma}$}
\put(80,85){\rule{56mm}{0.4mm}} 
\put(238,85.8){\vector(1,0){5}} 
}

\end{picture}
\end{center}

\vspace{-0.7cm}

Assume for the moment that $M$ is a compact manifold. The (reduced) Haefliger cohomology of  $F$, \cite{Hae}, is given as follows.   Let  ${\cU} = \{ U_i \}$ be a finite good cover of $M$ by foliation charts.  We may assume that the closures of the transversals $T_i$ are disjoint, and we set $T=\bigcup\,T_i$.   Denote by $\cA^k_c(T_i)$, the space of k-forms on $T_i$ with compact support, and with the usual $C^\infty$ topology. Set  $\cA^k_c(T) = \sum_i \cA^{k}_c(T_i)$,  and denote the exterior derivative by $d_T:\cA^k_c(T)\to  \cA^{k+1}_c(T)$.   Denote by $\cH$ the pseudogroup  consisting of  the holonomy maps induced by  $F$ on $T$.  
We assume that the range of each $h_{\gamma} \in \cH$ is maximal.     A holonomy map  $h_{\gamma}:T_i \to T_j$  induces the map  $h_{\gamma}^*:  \cA^{k}_c(T_j \cap h_{\gamma}(T_i))  \to  \cA^{k}_c(T_i).$  
Denote by  $\cA^k_c(M/F)$ the quotient of $\cA^k_c(T)$ by the closure of the vector subspace generated by elements of the form $\alpha-h_{\gamma}^*\alpha$ where $h_{\gamma} \in \cH$ and $\alpha\in\cA^k_c(T)$ has support contained in the range of $h_{\gamma}$.   
The exterior derivative $d_T$ induces $d_H:\cA^k_c(M/F)\to \cA^{k+1}_c(M/F)$.   
The associated cohomology theory is denoted $H^*_c(M/F)$ and is called the Haefliger cohomology of $F$.   This construction is independent of all choices made.

Note that if $F$ given by a fibration $M \to B$, then $H^*_c(M/F) = H^*(B;\R)$, the usual deRham cohomology of $B$.

Denote by $\cA^{p+k}(M)$ the space of smooth $p+k$-forms on $M$, and by $d_M$ its exterior derivative.  Assuming that the bundle $TF$ is oriented, there is a continuous open surjective linear map,
$
\dd \int_F :\cA^{p+k}(M)\longrightarrow \cA^k_c(M/F)
$
which commutes with $d_M$ and $d_H$, so it induces the map 
$ 
\dd \int_F :\oH^{p+k}(M;\R) \to \oH^k_c(M/F).
$
This map is given by choosing a partition of unity $\{\phi_i\}$ subordinate to the cover $\cU$, and for $\omega \in \cA^{p+k}(M) $, setting 
$$
\int_F \omega \,\, = \,\,   \sum_i \int_{U_i} \phi_i \omega   \in \maA^k_c(T),
$$
where $\dd \int_{U_i}$ is integration over the fibers of the projection $U_i \to T_i$.  Note that each integration $\omega \to \dd \int_{U_i} \phi_i \omega$ is essentially integration over a compact fibration, so $\dd \int_F$ satisfies the Dominated Convergence Theorem, which is important  for extending results to non-compact manifolds.

\medskip

We now drop the assumption  that $M$ is compact, but we still assume that $TF$ is oriented. 

\medskip

The extension of $H^*_c(M/F)$  to non-compact manifolds of bounded geometry is given as follows.   Let  ${\cU}$ be a  good cover of $M$ by foliation charts as above.  By condition $(5)$ of a good cover, the $U_i$ are of a fixed size.    By condition $(6)$, there is a global bound on the number of $U_j$ intersecting any given $U_i$  non-trivially.  As above,  the transversals $T_i \subset U_i$ may be assumed to be disjoint.  The  space $\cA^k_c(T)$ consists of all smooth k-forms on  $T = \bigcup_i T_i$ which have compact support in each $T_i$, and such that they are uniformly bounded in the usual $C^\infty$ topology.   As above, we have the exterior derivative $d_T:\cA^k_c(T)\to  \cA^{k+1}_c(T)$, and the integration  $\dd \int_F:\cA^{p+k}_b(M) \to \cA^k_c(T)$, where $\cA^{p+k}_b(M)$ is the space of smooth uniformly bounded, in the usual $C^\infty$ topology,  $p+k$-forms on $M$.  As above, this induces $\dd \int_F:\oH^{p+k}_b(M;\R) \to \oH^k_c(M/F)$, which satisfies the Dominated Convergence Theorem.

The holonomy pseudogroup  $\cH$ still acts on $\cA^k_c(T)$, and $\cA^k_c(M/F)$ is still the quotient of $\cA^k_c(T)$ by the closure of the vector subspace $V$ generated by elements of the form $\alpha_{\gamma}-h_{\gamma}^*\alpha_{\gamma}$ where $h_{\gamma} \in \cH$ and $\alpha_{\gamma}\in\cA^k_c(T)$ has support contained in the range of $h_{\gamma}$.  As before, we need to take care as to what ``... the vector subspace $V$ generated by elements of the form $\alpha_{\gamma}-h_{\gamma}^*\alpha_{\gamma}$ ..." means.  This is especially important in the proof of Lemma 3.12 of \cite{H95}.   

Members of $V$ consist of possibly infinite sums of elements of the form $\alpha_{\gamma}-h_{\gamma}^*\alpha_{\gamma}$, with the following restrictions, which may depend on the given member. \\ 
\indent $\bullet$ Any $h_{\gamma}$ occurs at most once in any member.\\
\indent $\bullet$ There is a uniform bound on the length norm of all the elements $\gamma$ in any given member. The length norm of $\gamma$ is the infimum of the lengths of the leafwise paths in the equivalence class $\gamma$.    \\ 
\indent $\bullet$  There is a uniform bound in the  $C^\infty$ topology on all the terms $\alpha_{\gamma}-h_{\gamma}^*\alpha_{\gamma}$ in  the given member.   \\
Note that the second condition immediately implies that there is an integer $\ell$ so that the number of elements of the member having the domain of $h_{\gamma}$ contained in a given $T_i$ is less than $\ell$.  Also, note that these restrictions are just the translation to foliations of the restrictions imposed in the definition of BG cohomology for bounded geometry manifolds.\\

\section{ Homotopy graph ${\cG}$ of F}

We will be interested in  foliations which have Hausdorff homotopy graphs, because their graphs are close enough to being fiber bundles that it is possible to extend analysis to them which works for fiber bundles.  See the comments in Section  \ref{IndForFols}.
 
The homotopy graph ${\cG}$ of  a foliation $F$ is the groupoid which consists of equivalence classes of leafwise paths in $M$.  Two paths are equivalent if  they are homotopic in their leaf with their end points fixed.  The groupoid structure of $\cG$ is given by composition of paths, and the units $\cG_0$ are just the classes of constant paths.   If $x \in M$ we denote by  $\overline{x}$ the  class of constant path at $x$.  So, $x \to \overline{x}$ gives an embedding  $M \simeq \cG_0 \subset \cG$, by which we consider $M$ as a subset of $\cG$.

$\cG$ is a (possibly non-Hausdorff-e.g.\ the Reeb foliation) manifold of dimension $2p+q = p+ n$.   Suppose that $ (x^i_1,...,x^i_p, y^i_1,...,y^i_q)$, are coordinates on $U_i$.  Similarly for $U_j$.  Consider the set $(U_i,\gamma,U_j)$, where $\gamma(0) \in U_i$ and $\gamma(1) \in U_j$,  consisting of all classes of leafwise paths $\alpha$ starting in $U_i$, ending in $U_j$ which are parallel to $\gamma$.  This set has coordinates given by $(x^i_1,...,x^i_p, y^i_1,...,y^i_q, x^j_1,...,x^j_p)$, where $(x^i_1,...,x^i_p, y^i_1,...,y^i_q)$ are the coordinates of $\alpha(0)$, and $(x^j_1,...,x^j_p)$ are the leaf coordinates of $\alpha(1)$.
Note that, because of our assumption on the maximality of the range of  $h_{\what{\gamma}}^*:T_i \to T_j$,  $h_{\what{\gamma}}^*$ will be the same for all $\what{\gamma} \in (U_i, \gamma, U_j)$.

There are two natural maps $s,r :{{\cG}}\to M$:  $s\bigl([\alpha]\bigr)=\alpha(0)$  $r\bigl([\alpha]\bigr)=\alpha(1).$  There are two natural foliations on $\cG$, $F_s$ whose leaves are $\widetilde{L}_x = s^{-1}(x)$ the ``fibers"  of $\cG$, and $F_r$, which we more or less ignore, whose leaves are $r^{-1}(x)$.  The map $r: \widetilde{L}_x \to L_x$ is  the simply connected  cover of the leaf $L_x$.  The structures on the leaves of $F$ may thus be lifted to structures on the leaves of $F_s$, and all local properties are preserved.

\section{Index Theory}

The Atiyah-Singer Index Theorem, \cite{ASI},  is one of the watershed results of the last century.    For an elliptic differential operator on a compact manifold,  this theorem establishes the equality of the analytical index of the operator (the dimension of the space of solutions of the operator minus the dimension of the space of solutions of its adjoint) and the topological index (which is defined in terms of characteristic classes associated to the operator and the manifold it is defined over).   It subsumes many other important theorems (e.\ g.\  the Signature Theorem, the Riemann-Roch Theorem) as special cases, and it has many far reaching extensions: to families of operators; to operators on covering spaces; to operators defined along the leaves of foliations; and to operators defined purely abstractly.

  A paradigm for  such an operator is the Atiyah-Singer twisted (super) operator $\maD_E$ acting on  $\maS \otimes E = (\maS_+ \otimes E) \oplus (\maS_- \otimes E)$ of Section \ref{GLresults}.    Note that $\maD_E^2 = \maD_{E,_-}\maD_{E,_+} \oplus \maD_{E,_+}\maD_{E,_-}$ preserves the splitting of $\maS \otimes E$.  In addition,  $\maD_{E,_+}$ and $\maD_{E,_-}$ are adjoints of each other.  It is not difficult to show that this implies that
$$
\Ind(\maD_E) = \dim(\ker(\maD_{E,_-}\maD_{E,_+})) - \dim(\ker(\maD_{E,_+}\maD_{E,_-})).
$$ 
Recall the extension of  Theorem \ref{ASTHM}. \\
{\bf Theorem \ref{ASTHM} [Atiyah-Singer]}.
\hspace{10mm}$\dd \Ind(\maD_E)   \,\, = \,\,  \int_M \what{A}(TM)\ch(E)$.

\begin{proof}  The original proof of Atiyah and Singer outlined in \cite{AS63} is based on cobordism theory, and a proof along those lines appeared in \cite{P65}.   The proof in  \cite{ASI} uses psuedodifferential operators and K-theory, techniques which generalize to many interesting cases.  The proof outlined below, given by Atiyah, Bott, and Patodi in \cite{ABP} and independently by Gilkey in \cite{Gilkey}, is based on the heat equation, which is a variation of the zeta-function argument due to Atiyah and Bott. 

If $M$ is compact, the spectrums of the operators $\maD^2_+  = \maD_{E,_-}\maD_{E,_+}$  and $\maD^2_-  = \maD_{E,_+}\maD_{E,_-}$ are particularly nice.  They have the same discrete, real eigenvalues $0 = \lambda_0< \lambda_1<  \lambda_2 < \cdots,$ 
 which march off to infinity rather quickly.  In addition their eigenspaces $\maE^{\pm}_{j}$  are finite dimensional, and for $\lambda_j > 0$ they are the same dimension.  Thus we may think of  the $\maD^2_{\pm}$ as infinite diagonal matrices
$$
\maD^2_{\pm} = \mbox{Diag}(0, \ldots, 0, \lambda_1, \ldots, \lambda_1, \lambda_2, \ldots, \lambda_2,\ldots),
$$
where each $\lambda_i$ occurs only a finite number of times.  The only difference between the two matrices is the number of zeros at the beginning!  The associated heat operators are the infinite diagonal matrices 
$$
e^{-t\maD^2_{\pm} }  \,\,=\,\,   \mbox{Diag}(1,\ldots, 1, e^{-t\lambda_1}, \ldots, e^{-t\lambda_1}, e^{-t\lambda_2}, \ldots).
$$
The $\lambda_i$ go to infinity so fast that these operators are of trace class, that is 
$$
\tr  (e^{-t\maD^2_{\pm} }) \,\,=\,\,  \sum^\infty_{j=0}  e^{-t\lambda_j} \dim \maE^{\pm}_j(\lambda_j) < \infty.
$$
So 
$$
\Ind(\maD_E)   \,\,=\,\,     \dim(\maE^+_0) - \dim (\maE^-_0)     \,\,=\,\,       \tr (e^{-t\maD^2_+}) - \tr  ( e^{-t\maD^2_-}).
$$

The heat operator $e^{-t\maD^2_+}$ is much more than trace class.   In fact it is a smoothing operator, so there is a smooth section $k_{t}^{+}(x,y)$ of  $\pi^*_1\maS_+ \otimes  \pi^*_2\maS_+^*$ over  $M \times M$ (just as in Section \ref{trace}), the Schwartz kernel of $e^{-t\maD^2_+}$, 
so that for $s \in C^{\infty}(\maS_+)$,
$$
e^{-t\maD^2_+}(s)(x)=\int _{_{_{M}}} k_{t}^{+} (x,y) s(y) dy.
$$
In particular, if  $\xi^+_{j,\ell}$ is an orthonormal basis of  $\maE^+_j$, we have
$$
k_{t}^{+} (x,y) = \sum_{j, \ell} e^{-t \lambda_j} \xi^+_{j,\ell}(x) \otimes  \xi^+_{j,\ell}(y), 
$$
where the action of $\xi^+_{j,\ell}(x) \otimes  \xi^+_{j,\ell}(y)$ on $s(y)$ is 
$$
\xi^+_{j,\ell}(x) \otimes  \xi^+_{j,\ell}(y)(s(y)) \,\, = \,\,    < \xi^+_{j,\ell}(y),s(y)> \xi^+_{j,\ell}(x),   
$$
and $< \cdot, \cdot>$ is the inner product on $\maS_{+,y}$.
It follows fairly easily that 
$$
{\tr } (e^{-t\maD^2_+}) \,\, = \,\,  \int_{_{_{M}}}{\tr }\Bigl{(}k_{t}^{+}(x,x)\Bigr{)}dx.
$$
Similarly for $e^{-t\maD^2_-}$.  So we have that 
$$
\Ind(\maD_E) \,\, = \,\, 
\int_{_{_{M}}}{\tr }\Bigl{(}k_{t}^{+}(x,x)\Bigr{)}dx -  
\int_{_{_{M}}}  {\tr }\Bigl{(}k_{t}^{-}(x,x)\Bigr{)}dx,
$$
which is independent of $t$.
For $t$ near zero, the heat operator is essentially a local operator and so is subject to local analysis.  It is a classical result, see for instance \cite{GilkeyBook} and \cite{BGV}, that it has an asymptotic expansion as $t \to 0$.   In particular,  for $t$ near 0,  where $n$ is the dimension of the compact manifold,
$$
{\tr } (k^{\pm}_t(x,x)) \,\, \sim\,\,  \sum_{m\ge -n} t^{m/2} a^{\pm}_m (x),
$$
where the $a^{\pm}_m(x)$ can be computed locally, that is, in any coordinate system and relative to any local framings.   Each $a^{\pm}_m(x)$ is a complicated expression in the derivatives of the $\maD^2_{\pm}$, up to a finite order which depends on $m$.   Now we have, since $\Ind(\maD_E)$ is independent of $t$,
$$
\Ind(\maD_E) \,\, = \,\, \lim_{t \to 0} \Big( \int_{_{_{M}}}{\tr } \sum_{m\ge -n} t^{m/2} a^{+}_m (x)dx -  
\int_{_{_{M}}}  {\tr } \sum_{m\ge -n} t^{m/2} a^{-}_m (x)dx \Big) \,\, = \,\, 
\int_{_{_{M}}} a^+_0(x) - \int_{_{_{M}}} a^-_0(x).
$$
It was the hope, first raised explicitly by McKean and Singer \cite{McS67}, that there might be some ``miraculous" cancellations in the complicated expressions for the $a^{\pm}_0(x)$ that would yield the Atiyah-Singer integrand, that is there would be a local index theorem.   Atiyah-Bott-Patodi and Gilkey, \cite{ABP,Gilkey},  showed that this was indeed the case, at least for twisted Dirac operators.  That is 
$$
\int_{_{_{M}}} a^+_0(x) - \int_{_{_{M}}} a^-_0(x) \,\, = \,\, \int_{M} \what{A}(TM) \ch(E).
$$
For a particularly succinct proof, which shows that the cancellations are not at all ``miraculous", but rather natural, see \cite{Getzler}. Standard arguments in K-theory, then lead directly to the full Atiyah-Singer Index Theorem. 
\end{proof}

\section{Index theory for foliations}\label{IndForFols}

In this section we give an outline of a heat equation proof for the extension of the higher families index theorem to foliations of compact manifolds. The original proof, by other methods, is due {to} Connes, \cite{ConnesBook}.

The Atiyah-Singer index theorem for families is a bit more complicated than that for compact manifolds.  Suppose that $F \to M \to B$ is a fiber bundle with fibers $F$, and $F$, $M$ and $B$ are compact manifolds.  Suppose further that $\maD_E$ is a fiberwise Atiyah-Singer operator associated to  a bundle $E$ over $M$.  Then,  on each fiber $\maD_E$ is elliptic, and  after some work, we may assume that the finite dimensional vector spaces $\ker(\maD^2_{\pm} )(x)$, $x \in B$, actually amalgamate to give smooth vector bundles over $B$.   So they have well defined Chern characters $\ch(\ker(\maD^2_{\pm} )) \in H^*(B;\R)$, and by definition,  $\Ind(\maD_E)  =  \ch(\ker(D^2_+))  - \ch(\ker(D^2_-))$.   Denote the tangent bundle along the fiber of $M \to B$ by $TF$, and integration over the fibers by $\dd \int_F$.  

\begin{theorem} [Atiyah-Singer families index theorem, \cite{ASIV}]
$$
\Ind(\maD_E) \,\, = \,\, \ch(\ker(\maD^2_+)) \,\, - \,\, \ch(\ker(\maD^2_-))  \,\, =  \,\, \int_F \what{A}(TF)\ch(E).
$$
\end{theorem}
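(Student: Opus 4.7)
The plan is to upgrade the heat-equation argument of the previous section to the family setting via the Bismut superconnection, which is the natural replacement of the scalar operator $\maD_E$ when one wants a Chern character valued identity on the base $B$. First I would form the infinite-dimensional $\Z_2$-graded Hilbert bundle $\cE \to B$ whose fiber at $b \in B$ is $L^2(F_b,(\maS \otimes E)|_{F_b})$, equipped with a unitary connection $\nabla^{\cE}$ built from a horizontal distribution on $M \to B$ together with the induced connection on the twisted spinor bundle. The Bismut superconnection is then the inhomogeneous operator
\[
\mathbb{A}_t \,\, = \,\, \sqrt{t}\,\maD_E \,\, + \,\, \nabla^{\cE} \,\, + \,\, \frac{1}{4\sqrt{t}}\,c(T),
\]
where $c(T)$ is Clifford multiplication by the curvature $2$-form of the horizontal distribution. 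Its square $\mathbb{A}_t^2$ is, fiberwise, a generalized Laplacian, so the heat operator $e^{-\mathbb{A}_t^2}$ makes sense as an element of $\cA^*(B)$ with values in fiberwise smoothing operators.

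Next I would take the fiberwise supertrace to obtain $\alpha_t := \Tr_s(e^{-\mathbb{A}_t^2}) \in \cA^*(B)$ and run the families analogue of the McKean--Singer identity: $\alpha_t$ is a closed form, and $\frac{d}{dt}\alpha_t$ is exact (via a standard transgression formula), so the de Rham class $[\alpha_t] \in H^*(B;\R)$ is independent of $t$. This is the bridge that will connect the two limits, exactly as in the proof of Theorem \ref{ASTHM}.

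The large-$t$ limit is the soft end: under the hypothesis that $\ker(\maD^2_{\pm})$ assemble into smooth $\Z_2$-graded vector bundles on $B$, standard spectral arguments force $\alpha_t$ to converge to $\ch(\ker(\maD^2_+)) - \ch(\ker(\maD^2_-))$, the Chern character of the index bundle. The small-$t$ limit is where the real work lies, and it will be the main obstacle. Here I would adapt Getzler's rescaling of the Clifford and coordinate variables to the superconnection setting, so that the extra $\nabla^{\cE}$ and $c(T)$ pieces in $\mathbb{A}_t$ conspire with the scalar heat-kernel asymptotics of the previous section to produce exactly the fiberwise integrand $\what{A}(TF)\ch(E)$. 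This step is Bismut's local families index theorem and requires a careful Duhamel expansion together with the observation that the ``bad'' degrees in $\mathbb{A}_t$ cancel fiber-by-fiber under the rescaling; there are genuinely more terms to track than in the scalar case, and verifying the cancellations is the technical heart of the argument. Once the small-$t$ limit is identified as $\int_F \what{A}(TF)\ch(E)$, the equality of two $t$-independent cohomology classes yields the theorem. The same outline then transports to the foliation case by replacing $M \to B$ with the homotopy graph $\cG$, using the Haefliger integration of Section \ref{Haef} in place of fiberwise integration, and using the superconnection heat kernel construction from \cite{HL99}.
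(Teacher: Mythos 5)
Your outline is correct: it is Bismut's superconnection heat-equation proof of the families index theorem, and all the essential ingredients are in place (the fiberwise $L^2$ super-bundle $\cE$, the superconnection $\sqrt{t}\,\maD_E + \nabla^{\cE} + \frac{1}{4\sqrt{t}}c(T)$ up to sign/normalization conventions, closedness and $t$-independence of $[\Tr_s(e^{-\mathbb{A}_t^2})]$ via transgression, the large-$t$ limit giving $\ch(\ker(\maD^2_+)) - \ch(\ker(\maD^2_-))$ once the kernels are assumed to form smooth bundles, and the small-$t$ limit identified with $\int_F \what{A}(TF)\ch(E)$ by Getzler rescaling). Note, however, that the paper does not prove this statement at all: it quotes it with the citation \cite{ASIV}, where the original Atiyah--Singer argument is topological, using K-theory and families of pseudodifferential operators rather than heat kernels, so your route differs from the cited proof. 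What your (Bismut--Berline--Getzler--Vergne) route buys is exactly what this paper exploits in the next section: a local index formula at the level of differential forms on the base, which survives the passage from fibrations to foliations, where the fiberwise integration is replaced by integration over the leaves into Haefliger cohomology and the analysis is carried out on the homotopy graph as in \cite{Bis86, BV, H95, HL99}; the price is the analytic care you flag (Duhamel expansions, the rescaling cancellations, and in the foliation case the transverse smoothness and spectral hypotheses), together with the assumption that the kernels form bundles, which in the K-theoretic approach is handled more cheaply by stabilizing the index class. So your proposal is sound, and it is the approach this paper's framework is built on, even though it is not the proof behind the citation attached to the theorem.
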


One major problem of working with foliations, as opposed to fibrations, is that, in general, a foliation $F$ will have both compact and non-compact leaves.  This introduces a number of difficulties, for example non-compact leaves can limit on compact ones, causing fearsome problems with the transverse smoothness of the heat operators.   Some of these difficulties can be solved by working on the homotopy graph $\cG$ of $F$ instead of $F$ itself, provided that $\cG$ is Hausdorff. 

The possible non-compactness of the leaves of $F_s$ also causes problems, particularly with the spectrums of leafwise operators, since on even the simplest non-compact manifold, namely $\R$, the spectrum of the usual Laplacian is the entire interval $[0,\infty)$.  Thus we can not think of the heat operators as nice infinite dimensional diagonal matrices with entries going quickly to zero.  However, these heat operators are still smoothing, so have nice smooth Schwartz kernels when restricted to any leaf.

As noted above, if $\cG$ is Hausdorff, then it is almost (but maybe not) a fiber bundle.  In particular, if $K \subset \wL_x = s^{-1}(x)$ is a compact subset of a leaf of $F_s$, there is an open  neighborhood $U_K$ of $x \in M$ and an open neighborhood $V_K $ of $K \subset \wL_x$, so that $s^{-1}(U_K)$ contains an open set  $W_K \simeq U_K \times V_K$, so that $\{y\} \times V_K \subset s^{-1}(y)$ for all $y \in U_K$.  That is, there is a local product structure near $K$.  This is enough so that Duhamel's formula for the derivative of a family of heat kernels extends to heat kernels defined on the leaves of $F_s$, see \cite{H95}, which is enough to extend standard results to this case.  

The heat kernel we are interested in is that for a Bismut superconnection for $F$. The notion of a superconnection was introduced by Quillen \cite{Q85}.  In \cite{Bis86}, Bismut constructed such a superconnection on fibrations and used it to give a heat equation proof of the Atiyah-Singer index theorem for families of compact manifolds.  In \cite{H95}, following Berline and Vergne, \cite{BV}, a Bismut superconnection  $B_t$ was constructed for foliations.  In simplest form it is $B_t = \sqrt{t} \maD_E + \nabla$, where $\nabla$ is a Bott connection, \cite{B70},  on the normal bundle $TM/TMF$ of $F$.   It is then pulled back to $\cG$ by the natural map $r:\cG \to M$.   The resulting operator,  think $\wB_t  = \sqrt{t} \wtit{\maD}_{\wtit{E}} + \wtit{\nabla}$,  is not a leafwise operator on the leaves of $F_s$.  However, its square, $\wB^2_t$, is a leafwise operator,  and is a super operator, that is $\Z_2$ graded just as $\maD^2_E$ is.  Thus we can write $\wB^2_t = \wB^2_{t,+}\oplus \wB^2_{t,-}$.  The heat kernel we want is $e^{-\wB^2_t}$.  

One of the results of \cite{H95} is that the leafwise  Schwartz kernel $k_t(\gamma_1,\gamma_2) = k_{t,+}(\gamma_1,\gamma_2) \oplus k_{t,-}(\gamma_1,\gamma_2)$ of   $e^{-\wB^2_t}$ is smooth in all its variables, both leafwise and  transversely to the leaves.  This allows us to define a Chern character which takes values in the Haefliger cohomology $H^*_c(M/F)$.    In particular,  this Chern character is given by the Haefliger class of the super trace  $\tr_s(e^{-\wB^2_t})$, namely 
$$
\ch(e^{-\wB^2_t})  \,\, = \,\,  [\tr_s(e^{-\wB^2_t})]  \,\, = \,\, [\int_F \tr_s (k_t(\overline{x},\overline{x})dx]  \,\, = \,\,
 [\int_F \tr(k_{t,+}(\overline{x},\overline{x}))]  -  [\int_F \tr(k_{t,-}(\overline{x},\overline{x})) dx].
 $$
We define 
$$
\Ind(\maD_E) \,\, = \,\, \ch(e^{-\wB^2_t})  \,\, \in \,\, H^*_c(M/F).
$$  

The proof  in \cite{HL99}  of the families index theorem for foliations  has three steps.  The first is to show that $\tr_s(e^{-\wB^2_t})$ is a closed Haefliger form and its cohomology class is independent of $t$, that is, the argument above for compact manifolds still holds.   This is the main result of \cite{H95}.  The fact that it is closed relies heavily on Duhamel's formula and the trace property of $\tr_s$.  Its independence of $t$ is fairly standard. 

The second step is to compute the limit as $t \to 0$ of $\tr_s(e^{-\wB^2_t})$.  The calculation for families of compact manifolds in \cite{Bis86} works just as well for foliations because the Schwartz kernel of $e^{-\wB^2_t}$ becomes very Gaussian along the diagonal as $t \to 0$, so the result is purely local, and locally the foliation case looks just like the compact families case.   So  for $\wB_t$ associated to $\maD_E$, 
$$
\lim_{t \to 0} \tr_s(e^{-\wB^2_t}) \,\, = \,\,  \int_F \what{A}(TF)\ch(E).
$$ 

Of course the final step is to compute the limit as $\lim_{t \to \infty}\tr_s(e^{-\wB^2_t})$, which is the main result of \cite{HL99}.   

The leafwise operator $\wtit{\maD}^2_{\wtit{E}}$  on $(\cG, F_s)$, satisfies the hypothesis of the Spectral Mapping Theorem, \cite{RS80},  so it has nice spectral projection operators.   Denote by $P_0$  the graded projection onto $\ker(\wtit{\maD}^2_{\wtit{E}})$, that is $P_0 = P_{0,+} \oplus P_{0,-}$ where $P_{0,\pm}$ is the projection onto $\ker(\wtit{\maD}^2_{\pm})$.   So $P_0$ is a super ``bundle", just as in the families case.

The Novikov-Shubin invariant  $NS(\maD_E)$ measures the spectral density of $\wtit{\maD}_{\wtit{E}}^2$ near zero.  Denote by $P_{\ep}$ the  spectral projection of $\wtit{\maD}_{\wtit{E}}^2$ for the interval $(0,\ep)$.  Then
$$
NS(\maD_E)  \,\, = \,\, \beta \,\, > \,\, 0  \quad  \Longleftrightarrow \quad \tr_s(P_{\ep}) \,\, \sim \,\, \ep^{\beta} \text{   as   } \ep \to 0.
$$
Note that $\tr_s(P_{\ep}) \in \oH^*_c(M/F)$.  The statement  $\tr_s(P_{\ep}) \,\, \sim \,\, \ep^{\beta} \text{   as   } \ep \to 0$ means that there is  a  constant $C>0$, so that for each $\ep$ near zero there is an element $Q_{\ep} \in \tr_s(P_{\ep})$,  with    $\|  Q_{\ep}  \|_T \leq C\epsilon^\beta$.   $NS(\maD_E)$ is the sup over all such $\beta$.  
Here $\| \cdot \|_T$ is the uniform norm, induced from the metric on $M$, on forms on the transversal $T$. The fact that for any two good covers of $M$ by foliation charts there is an integer $\ell$ so that any plaque of the first cover intersects at most $\ell$ plaques of the second cover implies easily that this condition does not depend on the choice of good cover.  
Note that the larger  $NS(\maD_E)$ is, the sparser the spectrum of $\wtit{\maD}^2_{\wtit{E}}$ is near $0$.

The Schwartz kernels of the spectral projections $P_0$, and $P_{\ep}$ are always smooth on each leaf .  They are transversely smooth if their Schwartz kernels are smooth in all directions, both leafwise and transversely.  

\begin{theorem}[\cite{HL99,BH08}]\label{HLBH}
Suppose that  the homotopy graph of $F$ is Hausdorff, the spectral projections $P_0$, and $P_{\ep}$ are transversely smooth (for $\ep$ sufficiently small), and that  $NS(\maD_E) > 3\codim(F)$.   Then
$$
\lim_{t \to \infty}  \tr_s(e^{-t\wtit{B}_t^2})  \,\, = \,\,  \tr_s(e^{-(P_0 \wtit{\nabla} P_0)^2})    \,\, = \,\,  
 \tr(e^{-(P_{0,+} \wtit{\nabla} P_{0,+})^2})  \,\, - \,\,  \tr(e^{-(P_{0,-} \wtit{\nabla} P_{0,-})^2}).
$$

If  $F$ is Riemannian, the condition on the Novikov-Shubin invariants becomes $NS(\maD_E) > \codim(F)/2$.
\end{theorem}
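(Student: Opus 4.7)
\medskip
\noindent\textbf{Proof proposal.} The strategy mirrors the classical long-time limit analysis of the Bismut superconnection heat kernel from the family index theorem, adapted to the homotopy graph and Haefliger cohomology. I would organize the argument in four steps.

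\emph{Step 1: Rescaling and spectral decomposition.} Rewrite $\wtit{B}_t^2$ as a perturbation of $t\wtit{\maD}_{\wtit{E}}^2$ by lower order terms coming from $\wtit{\nabla}$, its curvature, and commutators with Clifford multiplication. Introduce the spectral decomposition
$$
I \,\,=\,\, P_0 \,\, + \,\, P_\ep \,\, + \,\, P_{[\ep,\infty)},
$$
for $\ep>0$ to be chosen small at the end. By hypothesis $P_0$ and $P_\ep$ are transversely smooth, and hence so is $P_{[\ep,\infty)} = I - P_0 - P_\ep$.

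\emph{Step 2: Duhamel expansion.} Expand $e^{-\wtit{B}_t^2}$ using Duhamel's formula as a finite sum of multiple integrals of the schematic form
$$
\int_{\Delta_k} e^{-s_0 t\wtit{\maD}_{\wtit{E}}^2}\, A_1\, e^{-s_1 t\wtit{\maD}_{\wtit{E}}^2} \cdots A_k\, e^{-s_k t\wtit{\maD}_{\wtit{E}}^2}\, ds,
$$
where the $A_i$ are built from $\wtit{\nabla}$ and curvature terms and carry positive transverse degree. Since the Haefliger supertrace annihilates forms whose transverse degree exceeds $\codim(F)$, only finitely many $k$ contribute modulo lower-order corrections that go to $0$ as $t\to\infty$.

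\emph{Step 3: Spectral zone estimates.} Insert $I = P_0 + P_\ep + P_{[\ep,\infty)}$ between every pair of factors. Any term containing at least one factor of $P_{[\ep,\infty)}$ contributes an exponential decay $e^{-c\,t\,\ep}$ via functional calculus, so goes to $0$ as $t\to\infty$. For terms involving factors of $P_\ep$ but no $P_{[\ep,\infty)}$, use the Novikov-Shubin estimate $\|P_\ep\|_T \leq C\ep^{NS(\maD_E)}$ on the transversal $T$, together with analogous bounds on transverse derivatives of the associated Schwartz kernel. The threshold $3\codim(F)$ arises because passing from leafwise kernels to Haefliger classes requires control on up to roughly $\codim(F)$ transverse derivatives, and each superconnection factor $\wtit{\nabla}$ in the Duhamel expansion brings in further transverse-degree costs; a polynomial bound $\ep^{NS(\maD_E)}$ with exponent strictly greater than $3\codim(F)$ is exactly what is needed to absorb these and still go to $0$ as $\ep\to 0^+$. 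In the Riemannian case, holonomy acts isometrically on $T$ and the commutators of $P_\ep$ with transverse differentiation are uniformly bounded, so a much milder threshold $NS(\maD_E) > \codim(F)/2$ suffices.

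\emph{Step 4: Identification of the limit.} The only surviving contributions as $t\to\infty$ are the terms with $P_0$ inserted at every factor. Because $P_0\, \wtit{\maD}_{\wtit{E}}^2\, P_0 = 0$, each heat factor $P_0 e^{-s_i t\wtit{\maD}_{\wtit{E}}^2} P_0$ collapses to $P_0$, and the multiple integrals reassemble into the Duhamel expansion for $e^{-(P_0 \wtit{\nabla} P_0)^2}$. Taking $\tr_s$ and separating the $\Z_2$-grading yields the asserted identity.

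\emph{Main obstacle.} The hard step is Step 3. Unlike the compact case, where $\spec(\wtit{\maD}_{\wtit{E}}^2)\setminus\{0\}$ is bounded away from $0$, here continuous spectrum may accumulate at $0$, so one must obtain quantitative bounds on the $(0,\ep)$-spectrum and on transverse derivatives of its spectral kernel. The transverse smoothness hypothesis on $P_0$ and $P_\ep$ together with the specific Novikov-Shubin threshold are precisely what is needed to push the two successive limits $t\to\infty$ and $\ep\to 0^+$ through the Haefliger seminorms.
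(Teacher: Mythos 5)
Your high-level skeleton — Duhamel expansion around $e^{-t\wtit{\maD}_{\wtit{E}}^2}$, insertion of spectral projections, exponential decay from the high-spectrum piece, and Novikov--Shubin control of the small-spectrum piece — is the right family of ideas, but there is a genuine gap in the way you set up the spectral cut. You fix $\ep>0$, send $t\to\infty$, and then send $\ep\to 0^+$. For the $P_{[\ep,\infty)}$ blocks this is fine. But for the $P_{\ep}$ blocks the limit $t\to\infty$ is problematic at fixed $\ep$: the Duhamel factors $A_i$ coming from the Bismut superconnection scale like positive powers of $\sqrt{t}$ (recall $\wB_t=\sqrt{t}\,\wtit{\maD}_{\wtit{E}}+\wtit{\nabla}+\cdots$, so $\wB_t^2-t\wtit{\maD}_{\wtit{E}}^2$ contains terms growing in $t$), while $\|P_\ep e^{-st\wtit{\maD}^2_{\wtit{E}}}P_\ep\|$ only gives a bound of $1$, not decay. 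Thus a generic $P_\ep$-term in the expansion is bounded by something of the shape $C\,\ep^{NS(\maD_E)}\cdot t^{k}$, which is not uniformly bounded in $t$ for fixed $\ep$. The successive limits $\lim_{\ep\to 0^+}\lim_{t\to\infty}$ therefore do not close the argument. What [HL99] (and the paper's own outline in Section~\ref{proofHLBH}) do instead, following the [BGV] scheme, is to make the cutoff $t$-dependent, splitting the spectrum at $t^{-a}$ for a judiciously chosen $a>0$, so that the $[t^{-a},\infty)$ piece still decays like $e^{-t^{1-a}}$ while the $(0,t^{-a})$ piece is controlled by $t^{-a\,NS(\maD_E)}$ against the $t^{k}$ growth. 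The inequality $a\,NS(\maD_E)>k$ together with $a<1$ is exactly what produces the threshold; the factor $3\codim(F)$ is the outcome of bookkeeping the transverse degree and $\sqrt{t}$-powers through the whole Duhamel hierarchy, and is not adequately derived by the heuristic "degree plus further costs" in your Step~3.

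A second, smaller discrepancy: the Riemannian case in the theorem is not obtained in [BH08] by relaxing the same estimate because holonomy acts isometrically; rather, [BH08] abandons the $t\to\infty$ heat limit altogether, defines the index via a $K$-theoretic construction, identifies its Chern character with $\tr_s(e^{-\wtit{B}_t^2})$, and then uses a more elaborate heat-type operator to compute that Chern character as $\tr_s(e^{-(P_0\wtit{\nabla}P_0)^2})$ under the weaker hypothesis $NS(\maD_E)>\codim(F)/2$. So the two bounds in the theorem come from genuinely different proofs, not from one proof with a sharper constant in the Riemannian setting. Steps~1, 2, and 4 of your proposal are fine as far as they go; the missing content is the $t$-dependent spectral cut and the explicit degree/$\sqrt{t}$ accounting, and a correct account of the Riemannian route.
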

A very brief outline of the proof is given in Section  \ref{proofHLBH}.  

The condition on the Novikov-Shubin invariants for Riemannian foliations is sharp.  See \cite{BHW}.  The operator  $P_0 \wtit{\nabla} P_0$ is a ``connection" on the ``index bundle", that is on the super ``bundle" $\ker(\wtit{\maD}^2_{\wtit{E}})$.  Set 
$$
\ch(\ker(\maD_{E}) )    \,\, = \,\,  \tr_s(e^{-(P_0 \wtit{\nabla}  P_0)^2}).  
$$

The main result of \cite{HL99} is
\begin{theorem}  \label{HLBH2}
Suppose that  the homotopy graph of $F$ is Hausdorff, $\maD_E$ is transversely regular near $0$, that is the spectral projections $P_0$, and $P_{\ep}$ are transversely smooth (for $\ep$ sufficiently small), and $NS(\maD_E) > 3 \codim(F)$.   Then  in  $H^*_c(M/F)$,
$$
\Ind(\maD_E) \,\, = \,\, \ch(e^{-\wB^2_t})  \,\, = \,\,   \int_F \what{A}(TF)\ch(E)     \,\, = \,\,   \ch(\ker(\maD_{E}) ).
$$
\end{theorem}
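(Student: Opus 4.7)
The plan is to follow the three-step heat equation strategy sketched in the previous section, which parallels the compact-families proof but requires the foliation-specific machinery of Haefliger forms. First, I establish that $\tr_s(e^{-\wB_t^2})$ defines a closed Haefliger form whose class in $H^*_c(M/F)$ is independent of $t$. By the main result of \cite{H95}, the Schwartz kernel $k_t(\gamma_1,\gamma_2)$ is smooth both leafwise and transversely, and has Gaussian decay along the leaves, so the projection $\int_F \tr_s(k_t(\overline{x},\overline{x}))\,dx$ gives a well-defined element of $\cA^*_c(M/F)$.  Closedness comes from a Duhamel-type computation combined with the trace property of $\tr_s$ proved in \cite{H95}, and $t$-independence of the class is obtained by writing $\frac{d}{dt}\tr_s(e^{-\wB_t^2})$ as an exact Haefliger form via the same Duhamel identity.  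This justifies $\Ind(\maD_E) \,=\, [\tr_s(e^{-\wB_t^2})] \in H^*_c(M/F)$ for every $t > 0$, and reduces the theorem to evaluating the two limits.

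Second, I compute $\lim_{t\to 0}\tr_s(e^{-\wB_t^2})$.  As $t \to 0$ the kernel $k_t$ concentrates on the diagonal, so the computation is purely local on the leaves, and the Getzler rescaling carried out for fibrations in \cite{Bis86} transplants verbatim to $\wB_t$ on $\cG$; the only novelty is to take the Haefliger integral $\int_F$ at the end rather than the ordinary fiber integral.  This yields
$$
\lim_{t \to 0} \tr_s(e^{-\wB_t^2}) \,\, = \,\, \int_F \what{A}(TF)\,\ch(E)
$$
in $\cA^*_c(M/F)$, and combined with $t$-independence proves the first equality $\Ind(\maD_E) = \int_F \what{A}(TF)\ch(E)$.

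Third, I compute $\lim_{t\to\infty}\tr_s(e^{-\wB_t^2})$.  This is precisely where the hypotheses of transverse regularity of $P_0$ and $P_\ep$ and the Novikov-Shubin bound $NS(\maD_E) > 3\codim(F)$ (respectively $>\codim(F)/2$ in the Riemannian case) enter.  Under these assumptions, Theorem \ref{HLBH} produces
$$
\lim_{t \to \infty}\tr_s(e^{-\wB_t^2}) \,\, = \,\, \tr_s(e^{-(P_0 \wtit{\nabla} P_0)^2}) \,\, = \,\, \ch(\ker(\maD_E))
$$
in $H^*_c(M/F)$.  Chaining the three results then gives
$$
\Ind(\maD_E) \,\, = \,\, [\tr_s(e^{-\wB_t^2})] \,\, = \,\, \int_F \what{A}(TF)\,\ch(E) \,\, = \,\, \ch(\ker(\maD_E)),
$$
which is the asserted identity.

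The main obstacle is the $t \to \infty$ step (Theorem \ref{HLBH}), and essentially everything of substance in \cite{HL99} goes into it.  In the compact (or families) case, the discreteness of the spectrum and a positive spectral gap above $0$ make the large-$t$ limit automatic, since everything orthogonal to the zero-eigenspace decays exponentially.  For foliations with non-compact leaves, the spectrum of $\wtit{\maD}_{\wtit{E}}^2$ near $0$ is typically continuous, so without a quantitative bound on its accumulation at $0$ the leafwise traces entering $\tr_s(e^{-t\wtit{\maD}_{\wtit{E}}^2})$ need not converge transversely, and the replacement of $\wB_t$ by the ``reduced'' superconnection $P_0\wtit{\nabla} P_0$ cannot even be stated at the level of Haefliger forms.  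The Novikov-Shubin hypothesis supplies the required quantitative spectral decay, and the transverse regularity of $P_0$ and $P_\ep$ ensures that this spectral cutoff can be performed inside $\cA^*_c(M/F)$ rather than merely leafwise, so that a Volterra expansion together with the Duhamel argument of Step 1 can be used to isolate the zero-mode contribution.
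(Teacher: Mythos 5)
Your proposal follows essentially the same three-step heat-equation strategy the paper attributes to \cite{H95} and \cite{HL99}: (1) closedness and $t$-independence of the Haefliger class of $\tr_s(e^{-\wB_t^2})$, (2) the $t\to 0$ local computation giving $\int_F \what{A}(TF)\ch(E)$, and (3) the $t\to\infty$ limit supplied by Theorem \ref{HLBH}, which is exactly where the transverse regularity and Novikov-Shubin hypotheses enter. You correctly identify step (3) as the substantive obstacle and correctly explain the role of the spectral hypotheses, so this matches the paper's argument.
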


\begin{corollary}
Suppose that  the homotopy graph of $F$ is Hausdorff.
If there is a gap in the spectrum of $\wtit{\maD}_{\wtit{E}}^2$ at  zero, that is there is $\ep > 0$ with $P_{\ep} = 0$,  and $P_0 = 0$, then 
 $$ 
 \int_F \what{A}(TF)\ch(E)  \,\, = \,\, \Ind(\maD_{E})   \,\, = \,\,  0.
 $$
 \end{corollary}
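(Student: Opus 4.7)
The plan is to recognize this corollary as an immediate consequence of Theorem \ref{HLBH2} once one verifies that both hypotheses of that theorem (transverse regularity near zero and a large enough Novikov--Shubin invariant) are automatically satisfied under the spectral gap assumption.

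First, I would check the transverse regularity. The hypothesis that $P_\ep = 0$ for some $\ep > 0$ together with $P_0 = 0$ means that both spectral projections of interest are identically the zero operator, whose Schwartz kernel is trivially transversely smooth. Hence the transverse regularity condition of Theorem \ref{HLBH2} holds with no additional work. Next, I would verify the Novikov--Shubin condition. Since $\tr_s(P_\ep) = 0$ in $H^*_c(M/F)$ for all sufficiently small $\ep$, we may take the representative $Q_\ep = 0$ with $\|Q_\ep\|_T = 0 \leq C\ep^\beta$ for every $\beta > 0$ and every $C > 0$. Thus $NS(\maD_E) = +\infty$, which certainly exceeds $3\codim(F)$ (and a fortiori $\codim(F)/2$ in the Riemannian case).

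With the hypotheses verified, Theorem \ref{HLBH2} gives the identification
$$
\int_F \what{A}(TF)\ch(E) \,\,=\,\, \Ind(\maD_E) \,\,=\,\, \ch(\ker(\maD_E)) \,\,=\,\, \tr_s(e^{-(P_0\wtit{\nabla} P_0)^2}) \qquad \text{in } H^*_c(M/F).
$$
Finally, since $P_0 = 0$, the connection-like operator $P_0\wtit{\nabla} P_0$ vanishes identically, as does its Schwartz kernel along the diagonal; therefore the leafwise supertrace defining $\ch(\ker(\maD_E))$ is zero, and integration over $F$ produces the zero Haefliger class. Combining the two equalities yields $\int_F \what{A}(TF)\ch(E) = 0 = \Ind(\maD_E)$, as claimed.

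There is no real obstacle here beyond bookkeeping: the only mildly delicate point is convincing oneself that vanishing of the spectral projection representatives forces $NS(\maD_E)$ to be arbitrarily large rather than merely undefined, which is handled by exhibiting the zero representative $Q_\ep = 0$.
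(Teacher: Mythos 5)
Your proposal is correct and follows exactly the route the paper takes: the paper's own (one-line) proof of this corollary says precisely that transverse smoothness is obvious, $NS(\maD_E) = \infty$, and $\ch(\ker(\maD_E)) = 0$, so the result is immediate from Theorem \ref{HLBH2}. You have simply spelled out the same verifications in more detail.
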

This is immediate as transverse smoothness is obvious, $NS(\maD_E) = \infty$ in this case, and $\ch(\ker(\maD_E) ) = 0$.
 
\section{Proof of Theorem \ref{BHspin}}\label{PfBHspin}

Recall\\ 
{\bf Theorem  \ref{BHspin}} \cite{BH19}
 {\it Suppose that $F$  is a spin foliation with Hausdorff homotopy graph of a compact enlargeable  manifold $M$.   Then $M$  does not admit a metric which induces positive scalar curvature on the leaves of $F$.}

We now have the ingredients to prove this.  

We may assume that both $M$ and $F$ are even dimensional, say $2n$ and $2p$.   If $F$ is even dimensional and $M$ is not, we  replace them by $F$  and  $M \times \S^1$, where the new foliation $F$ is just the old $F$ on each $M \times \{x\}$ for $x \in S^1$.  If  $F$ is not even dimensional, we replace it by $F \times \S^1$ and $M$ by $M \times \S^1$ or $M \times \S^1 \times \S^1 = M \times \T^2$, depending on whether $M$ is not or is even dimensional.

Since $M$ is enlargeable,  it has a covering $\wM \to M$ and an $\ep$  contracting map $f_{\ep}: \wM \to \S^{2n}$, with $\ep$ as small as we like.   So  $f_{\ep}$ is constant outside a compact subset and has non-zero degree.  Thus there is a Hermitian bundle $\wE \to \wM$, so that $\wE$ is trivial outside a compact subset of $\wM$, and 
$$ 
\ch(\wE) \,\, = \,\, \dim(\wE) \,\, + \,\,  \frac{1}{(n-1)!} c_n(\wE)   \quad \text{with} \quad  \int_{\wM }c_n(\wE) \neq 0.
$$
In addition,  we may assume that $|| \maR^{\wE}_{\wF} || \leq C\ep^2$, just as in Gromov-Lawson, where $\wF$ is the foliation on $\wM$ induced by $F$, and $\ep$ is as small as we like.  

Assume $M$ has a metric which induces positive (so uniformly positive since $M$ is compact) scalar curvature on the leaves of $F$.  The same is true of the lift of the metric  to $\wM$ and $\wF$.  We may choose $\ep$ so small that the leafwise operator 
$$
\wtit{\maD}_{\wE}^2 \,\, = \,\, \wtit{\nabla}^* \wtit{\nabla} \,\, + \,\,  \frac{1}{4}\wtit{\kappa} \,\, + \,\, \mathcal{R}^{\wE}_{\wF}
$$
is uniformly positive on the leaves of $\wtit{F}$,  that is there is $c > 0$ so that $c\I \leq \frac{1}{4}\wtit{\kappa} \,\, + \,\, \mathcal{R}^{\wE}_{\wF}$ as an operator.  Then, just as in the Gromov-Lawson case,  $\ker(\wD_{\wE}) = 0$, so $P_0 = 0$.  

What about $P_{\ep}$? 
Suppose that $P_{\ep} \neq 0$ for all $\ep >0$.  Let  $\varphi \neq 0$ be in the image of $P_{\ep}$.
Then there is $\wtit{x}_{\ep}  \in \wM$ so that $\varphi \neq 0$ on $s^{-1}(\wtit{x}_{\ep})$.  We may assume that the 
 $L^2$ norm $||\varphi||_{s^{-1}(\wtit{x}_{\ep})}$ of  $\varphi$ on $s^{-1}(\wtit{x}_{\ep})$ is $1$. 
Then we have  for all small positive $\ep$, 
$$
\ep \,\, \geq \,\, ||\wtit{\maD}_{\wE}^2 \varphi||_{s^{-1}(\wtit{x}_{\ep})}   \,\, = \,\,  ||\wtit{\maD}_{\wE}^2 \varphi||_{s^{-1}(\wtit{x}_{\ep})} \,   ||\varphi||_{s^{-1}(\wtit{x}_{\ep})}   \,\, \geq \,\,  | \langle \wtit{\maD}_{\wE}^2\varphi, \varphi \rangle_{s^{-1}(\wtit{x}_{\ep})} | 
\,\, = \,\, 
$$
$$
| \int_{s^{-1}(\wtit{x}_{\ep})} \langle\wtit{\nabla}^* \wtit{\nabla} \varphi , \varphi \rangle \,\, + \,\, 
\int_{s^{-1}(\wtit{x}_{\ep})} \langle ( \frac{1}{4}\wtit{\kappa} \,\, + \,\, \mathcal{R}^{\wE}_{\wF})\varphi, \varphi \rangle | \,\, \geq \,\, 
\int_{s^{-1}(\wtit{x}_{\ep})} ||\wtit{\nabla}\varphi||^2 \,\, + \,\, 
\int_{s^{-1}(\wtit{x}_{\ep})} \langle c\I \varphi, \varphi \rangle 
\,\, \geq  \,\,  c||\varphi||^2_{s^{-1}(\wtit{x}_{\ep})} \,\, \geq \,\, c,
$$
an obvious contradiction.
Thus the spectrum of $\wD^2_{\wE}$ has a  gap at zero, { {i.e.\ it is contained in $[\ep,\infty)$ for some $\ep >0$}.}

Denote by $\I^m$, $m = \dim(\wE)$,  the trivial $\C$ bundle over $\wM$ of complex dimension $m$.  {By the same argument,  $\wtit{\maD}_{\I^m}^2$ has a gap in its spectrum at zero, so again the Schwartz kernel of the projection onto the kernel of $\wtit{\maD}_{\I^{m}}$ is the zero section. Exactly as in \cite{H02}, there is an index theorem which holds in the bounded smooth Haefliger cohomology $H_c^*(\wM/\wF)$ for each of the two operators. Roughly speaking, we would like to apply this theorem and interpret the difference of the two formulaes as for a single manifold, an equality in some integrable Haefliger cohomology, and integrate it  against the Connes fundamental class to get a contradiction. This general principle is indeed  legal in the spirit of a relative index theorem for foliations, and is easily justified  in our specific case where we have gaps in the two spectra of the involved operators $\wtit{\maD}_{\wE}^2$ and $\wtit{\maD}_{\I^m}^2$. This was briefly indicated  in \cite{BH19} and  we proceed now to give more details for the convenience of the reader.}

%


{As in \cite{Roe91} and \cite{HL99}, one uses the heat equation method. The key point is the local computation of the local supertrace of the heat kernel  of the Bismut superconnection associated with our foliation \cite{H95}. One has though to be careful as usual since the heat kernels don't provide compactly supported forms, so that one first needs to approximate it by using  Schwartz functions with compactly supported Fourier transforms where the usual transgression formula  holds inside compactly supported differential forms and where the independence of $t$ in the final answer is obvious from the closedness of the Connes transverse fundamental cycle $C$. So we follow here the {{skeleton}} of the proof as given in  \cite{Roe91} for  a single manifold and we define the functional calculus using the inverse Fourier formula now in the context of {{superconnections}}.  {{This uses that the Grassmann}} variables are nilpotent, see for instance \cite{GorokhovskyLott} and also \cite{BC} for similar constructions. So we may approximate the Gauss function exactly as in \cite{Roe91} and use the unit speed principle for the two Dirac operators to deduce that the difference of the pointwise supertraces associated with $\wtit{\maD}_{\wE}$ and $\wtit{\maD}_{\I^{m}}$ is a compactly supported smooth transverse differential form, denote it $\omega_t$, and hence yields a compactly supported closed Haefliger form  after integration over the leaves. Moreover, we can find such Schwartz function so that when $t\to 0$, the assymptotics of the local supertraces coincides with those of the corresponding heat  superconnections, see again \cite{Roe91}.}

{To be more specific, denote by $dvol_{\wF}$  the leafwise volume form associated with the fixed metric,
then the closed compactly supported Haefliger differential form ${{\dd x \longmapsto \int_{\wF}}} \omega_t  dvol_{\wF} (x)$
can be integrated against the Haefliger fundamental cycle, say the Connes fundamental closed current $C$, to give the $t$-independent scalar
$$
\left<\, C, \;  \int_{\wF} \omega_t dvol_{\wF} \, \right>,
$$
which also does not depend on the choice of the Schwartz function, see again \cite{Roe91}. 
The  large time limit vanishes, i.e.
$$
\lim_{t\to +\infty} \left<\, C, \;  \int_{\wF} \omega_t dvol_{\wF} \, \right> = 0,
$$
because it coincides with the difference of the indices which both vanish here, and we have gaps around $0$ in the spectra \cite{Roe91}. On the other hand and as explained above the small time limit is given by the same small time limit of the heat kernels, hence it is precisely given by
$$
\lim_{t\to 0} \left<\, C, \;  \int_{\wF} \omega_t dvol_{\wF} \, \right> = \left< C,  \int_{\wF} \hat{A} (F) [\ch (\wE) - m] \right> = \left< C,  \int_{\wF} \ch_n (\wE)\right> = \int_{\wM} \ch_n (\wE).
$$
{{So we conclude that $\dd \int_{\wM}\ch_n(\wE) = 0$ which gives the needed contradiction,  since$\dd\int_{\wM} \ch_n (\wE) = \frac{1}{(n-1)!} \int_{\wM} c_n (\wE)$ is nonzero. }}
}
%
%
 

\medskip
{Now all these considerations show that the above proof remains valid for non-compact $M$, provided we assume that $(M, F)$ has bounded geometry and the leafwise scalar curvature of the induced metric on $F$ is {{ {\em uniformly}}}  positive. Indeed,  the main arguments are  either the local asymptotics of the heat kernels, the vanishing of the large time limit, or the reduction to compactly supported Haefliger cohomology and hence reduction to estimates over compact subspaces}.  In particular we have  {{proven}},

\medskip

\begin{theorem}\label{BHspin3}
Suppose that $F$  is a spin foliation with Hausdorff homotopy graph of a non-compact enlargeable Riemannian manifold $M$, with $(M,F)$ of bounded geometry.  Then the induced metric on $F$ does not have uniformly positive leafwise scalar curvature.
\end{theorem}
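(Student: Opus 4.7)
\textbf{Proof strategy for Theorem \ref{BHspin3}.}
The plan is to adapt the argument for Theorem \ref{BHspin} from Section \ref{PfBHspin}, replacing compactness by bounded geometry and exploiting uniform positivity of the leafwise scalar curvature throughout. I proceed by contradiction: suppose the induced metric on $F$ has a uniform positive lower bound $\kappa_0$ on the leafwise scalar curvature. After crossing with circles, I may assume $\dim M = 2n$ and $\dim F = 2p$ are even without losing bounded geometry, enlargeability, spin-ness, or uniform leafwise PSC. Enlargeability then yields, for each prescribed $\ep > 0$, a metric cover $\wM \to M$ together with an $\ep$-contracting map $f_\ep : \wM \to \S^{2n}$ of non-zero degree that is constant outside a compact set. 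Pulling back a Hermitian bundle $E \to \S^{2n}$ with $\int_{\S^{2n}} c_n(E) \neq 0$ produces $\wE = f_\ep^* E$ on $\wM$, trivial outside a compact set, and satisfying the curvature bound $\|\mathcal{R}^{\wE}_{\wF}\| \le C\ep^2$ uniformly on $\wM$.

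Next, choose $\ep$ small enough that $\tfrac14 \kappa_0 - C\ep^2 \ge c > 0$. The leafwise Schr\"odinger--Lichnerowicz identity
$$
\wtit{\maD}_{\wE}^2 \,=\, \wtit\nabla^*\wtit\nabla \,+\, \tfrac{1}{4}\wtit\kappa \,+\, \mathcal{R}^{\wE}_{\wF}
$$
then yields, on every leaf $s^{-1}(\wx) \subset \cG$, the operator inequality $\wtit{\maD}_{\wE}^2 \ge c\,\I$, by exactly the computation used in the compact case in Section \ref{PfBHspin}. Consequently the $L^2$-spectrum of $\wtit{\maD}_{\wE}^2$ on each leaf of $F_s$ is contained in $[c,\infty)$, so $P_0 = 0$ and $P_\delta = 0$ for $\delta < c$. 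The identical argument applied with the trivial bundle $\I^m$ of rank $m = \dim\wE$, for which $\mathcal{R}^{\I^m}_{\wF} = 0$ and only $\tfrac14\wtit\kappa \ge \tfrac14\kappa_0$ enters, produces the same uniform spectral gap for $\wtit{\maD}_{\I^m}^2$.

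With both leafwise spectra bounded away from zero I would run the heat-equation proof of the foliation higher index theorem in the bounded-geometry Haefliger cohomology of Sections \ref{Haef}--\ref{IndForFols}, following the relative scheme of \cite{Roe91} transposed to the superconnection setting of \cite{H95, HL99}. Form the Bismut superconnections $\wB_t^{\wE}$ and $\wB_t^{\I^m}$ on $\cG$; as in the compact case, approximate $e^{-tx^2}$ by Schwartz functions whose Fourier transforms are compactly supported, so that finite propagation speed for the two Dirac operators makes the difference
$$
\omega_t \,=\, \tr_s\bigl(e^{-(\wB_t^{\wE})^2}\bigr) \,-\, \tr_s\bigl(e^{-(\wB_t^{\I^m})^2}\bigr)
$$
a compactly supported closed Haefliger form whose class in $H^*_c(\wM/\wF)$ is $t$-independent. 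Pairing against the Connes transverse fundamental current $C$, the small-time limit reproduces, by the local Bismut asymptotics,
$$
\lim_{t\to 0}\bigl\langle C,\, \textstyle\int_{\wF}\omega_t\,dvol_{\wF}\bigr\rangle \,=\, \bigl\langle C,\, \textstyle\int_{\wF}\what A(\wF)[\ch(\wE)-m]\bigr\rangle \,=\, \tfrac{1}{(n-1)!}\int_{\wM} c_n(\wE)\,\ne\,0,
$$
while the large-time limit vanishes, because both leafwise kernels are zero by the spectral gaps so both Haefliger indices are zero. This is the desired contradiction.

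The hard part is not the spectral estimate, which is essentially the leafwise Gromov--Lawson argument powered by bounded geometry, but rather the careful justification of the non-compact relative index step: one must check that $\int_{\wF}\omega_t\,dvol_{\wF}$ is genuinely a compactly supported Haefliger form for each $t > 0$, that its Haefliger class does not depend on $t$ (via the trace property of $\tr_b$ from Section \ref{trace} together with the transgression/Duhamel formula), and that the pairing with $C$ commutes with both time limits. Finite propagation speed of the Dirac operators provides compact transverse support outside the compact set where $\wE$ differs from $\I^m$; transverse smoothness of the superconnection heat kernel, guaranteed by the Hausdorff homotopy graph hypothesis and the analysis of \cite{H95}, ensures the Haefliger class is well defined; and the Dominated Convergence Theorem for bounded-geometry integration along leaves (Section \ref{trace}) exchanges the limits with the pairing against $C$.
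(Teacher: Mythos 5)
Your proposal is correct and follows essentially the same route as the paper: the leafwise Lichnerowicz argument giving uniform spectral gaps for both $\wtit{\maD}_{\wE}$ and $\wtit{\maD}_{\I^m}$, followed by the Roe-style relative heat-equation index argument in bounded Haefliger cohomology (finite propagation, compactly supported $\omega_t$, pairing with the Connes fundamental current, nonzero small-time limit versus vanishing large-time limit). The technical checks you flag at the end are exactly the points the paper invokes, via \cite{Roe91}, \cite{H95}, and \cite{HL99}, to justify that the compact argument extends to the bounded geometry setting.
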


\medskip

\section{Proof of Theorem \ref{BHspin2}}\label{NONPSC}

Theorem \ref{BHspin3} does not preclude $M$ from having metrics of PSC, nor do the theorems of Gromov-Lawson and Zhang on the non-existance of  uniformly positive scalar curvature.
For such a result we have the following. {Recall that $H^*_c(M/F)$ denotes the bounded Haefliger cohomology.}\\

\noindent
{\bf Theorem   \ref{BHspin2}} 
{\it Suppose that $F$  is a spin foliation with Hausdorff homotopy graph of a non-compact Riemannian manifold $M$.  Suppose  that $(M,F)$ is of bounded geometry, and that $\what{A}(F) \neq 0$ in $H^*_c(M/F)$.  Suppose further that the Atiyah-Singer operator $\maD$ on $F$ {is transversely regular near $0$
with $NS(\maD) > 3 \codim(F)$}.  Then  the metric on the leaves of $F$ does not have positive scalar curvature.}

{\it If  $F$ is Riemannian, the condition on the Novikov-Shubin invariants becomes $NS(\maD) > \codim(F)/2$.}

\medskip

{Recall again that $\maD$ is transversely regular near $0$ if the spectral projections $P_0$ and $P_{\ep}$ are transversely smooth, for sufficiently small $\ep$, see \cite{BH08}.}
\medskip

\begin{proof} {As already explained, the proof given in \cite{H02} for a single bounded geometry manifold can be extended to foliations to give the index formula in the bounded Haefliger cohomology.} The conditions on  $P_0$, $P_{\ep}$, and $NS(\maD)$ hence insure that  $\what{A}(F) = \Ind(\maD) = \ch(\ker(\maD))$ in $H^*_c(M/F)$, so we only need to show that if $F$ has PSC, then $\ker(\maD) = 0$.  But the argument in the proof of the Lichnerowicz theorem adapts easily, as above, to show exactly this. 

If  $F$ is Riemannian and  $NS(\maD) > \codim(F)/2$,  then again $\what{A}(F) = \Ind(\maD) = \ch(\ker(\maD))$ in $H^*_c(M/F)$.  See \cite{BH08}.
\end{proof}

\begin{corollary}
Suppose that $(M,F)$ is as in Theorem  \ref{BHspin2}, with positive scalar curvature on the leaves of $F$.  If the Novikov-Shubin invariant $NS(\maD)$ exists, then $NS(\maD) \leq 3\codim(F)$.
\end{corollary}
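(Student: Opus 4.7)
The corollary is essentially the contrapositive of Theorem \ref{BHspin2}, so my plan is to argue by contradiction. Assume the hypotheses of the corollary, so in particular that the leaves of $F$ carry positive scalar curvature and that $NS(\maD)$ exists; suppose further, for contradiction, that $NS(\maD) > 3\codim(F)$. All other hypotheses of Theorem \ref{BHspin2} are already in force by the phrase ``as in Theorem \ref{BHspin2}'': $F$ is spin with Hausdorff homotopy graph, $(M,F)$ has bounded geometry, $\what{A}(F) \neq 0$ in $H^*_c(M/F)$, and $\maD$ is transversely regular near $0$. Thus all the hypotheses of Theorem \ref{BHspin2} are verified, and that theorem asserts that the induced metric on the leaves of $F$ does not have positive scalar curvature, contradicting the standing assumption.

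The only point to verify is interpretive: that $NS(\maD) > 3\codim(F)$ in the sense of the sup in Section \ref{IndForFols} implies the inequality as used in Theorem \ref{BHspin2}. Since $NS(\maD)$ is defined as the supremum of exponents $\beta>0$ for which $\tr_s(P_\ep) \sim \ep^\beta$ in the Haefliger norm, the strict inequality $NS(\maD)>3\codim(F)$ guarantees the existence of some admissible $\beta$ with $\beta>3\codim(F)$, which is exactly what Theorem \ref{BHspin2} needs.

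The hard part of the corollary is not really a hard part at all; all of the substantive work was carried out in Theorems \ref{HLBH2} and \ref{BHspin2}, where the higher index $\int_F \what{A}(F)$ in Haefliger cohomology is identified with the Chern character of the leafwise index bundle under exactly the spectral hypothesis $NS(\maD) > 3\codim(F)$ (respectively $NS(\maD) > \codim(F)/2$ in the Riemannian case). What the corollary does is simply invert this implication, reading the Novikov--Shubin threshold as a \emph{necessary} condition on the spectral density of $\wtit{\maD}^2$ at zero whenever leafwise PSC is known to coexist with a nonvanishing $\what{A}$-class. The same contrapositive argument, applied to the Riemannian version of Theorem \ref{BHspin2}, would yield the sharper bound $NS(\maD)\leq \codim(F)/2$ when $F$ is Riemannian.
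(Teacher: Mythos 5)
Your proposal is correct and is exactly the paper's intended argument: the corollary is the contrapositive of Theorem \ref{BHspin2}, with ``as in Theorem \ref{BHspin2}'' supplying the structural hypotheses (spin, Hausdorff homotopy graph, bounded geometry, $\what{A}(F)\neq 0$, transverse regularity), so that $NS(\maD) > 3\codim(F)$ together with leafwise PSC would contradict that theorem. Your remark about the supremum definition of $NS(\maD)$ and the Riemannian variant matches the paper's reading as well; nothing is missing.
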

That is, if such an $(M,F)$ has leafwise PSC and $NS(\maD)$ exists, then the spectrum of the leafwise Atiyah-Singer operator must be sufficiently dense near $0$.

\section{Brief outline of the proof of Theorem \ref{HLBH}}\label{proofHLBH}

In \cite{H95}, it was proven in general that for spin foliations of compact manifolds, 
$$
\lim_{t \to 0 } \tr_s(e^{-\wtit{B}^2_t}) \,\, = \,\, \int_F \what{A}(TM) \ch(E),
$$
and that the Haefliger class of $\tr_s(e^{-\wtit{B}^2_t})$ does not depend on $t$.

In \cite{HL99},  the second half of the theorem was proven by adapting  an argument in \cite{BGV}, namely splitting the spectrum of $\wtit{\maD}^2_{\wE}$ into three pieces,  $0$ and the intervals $(0,t^{-a})$ and $[t^{-a}, \infty)$, for judicious choice of $a > 0$.  In \cite{BGV}, they deal with the compact families case, and so for $t$ large enough, the spectrum in the interval $(0,t^{-a})$ is the empty set.  This is not the case when working on the homotopy graph of $F$, since  in general the leaves of $F_s$ are not compact.  To handle this problem, some assumptions were made.  The first was that the spectral projections $P_0$, and $P_{\ep}$ are transversely smooth (for $\ep$ sufficiently small).  The second was  that $NS(\maD_E)  > 3\codim(F)$.   One also has the fact that on the  interval $[t^{-a}, \infty)$,  $\tr_s(e^{-\wB^2_t})$ is decaying very rapidly as $t \to \infty$.  A  lengthy and quite complicated argument then shows that 
$$
\lim_{t \to \infty} \tr_s(e^{-\wB^2_t}) = \tr_s(e^{-(P_0 \wtit{\nabla} P_0)^2}).
$$

In \cite{BH08}, we assumed that $F$ is Riemannian.  This means that the leaves of $F$ stay a fixed distance apart, and implies that the homotopy graph of $F$ is more than Hausdorff - it is actually a fiber bundle.  We again assumed that $\maD_E$ is transversely regular near $0$, but only needed  that $NS(\maD_E) > \codim(F)/2$.  We used a $K$-theory definition of the index, and showed that its Chern character in Haefliger cohomology is the same as $\tr_s(e^{-\wtit{B}^2_t})$.  We then used a more complicated operator of heat type to show that this  Chern character is equal to $\tr_s(e^{-(P_0 \wtit{\nabla} P_0)^2})$.

\newpage

\bibliographystyle{alpha}

\end{document}